\theoremstyle{plain}
\newtheorem{X}{X}[section]
\newtheorem{corollary}[X]{Corollary}
\newtheorem{lemma}[X]{Lemma}
\newtheorem{proposition}[X]{Proposition}
\newtheorem{theorem}[X]{Theorem}
\newtheorem{hypothesis}[X]{Hypothesis}
\theoremstyle{remark}
\newtheorem{remark}[X]{Remark}
\newcommand{\A}{\mathcal A}
\newcommand{\ZZ}{\mathbb{Z}}
\renewcommand{\r}{\mathbf{r}}
\renewcommand{\d}{{\rm d}}
 \newcommand{\e}{{\rm e}}
\newcommand\mycom[2]{\genfrac{}{}{0pt}{}{#1}{#2}}
\title{}
\author{}
\address{}
\email{}
\address{}
\email{}
\date{\today}
\begin{document}
\title[
]{Major arcs and moments of arithmetical sequences}

\date{\today}
 
\author{R.\ de la Bret\`eche}
\address{
Institut de Math\'ematiques de Jussieu-Paris Rive Gauche\\
Universit\'e  Paris Diderot\\ \newline \rule[0ex]{0ex}{0ex}\hspace{8pt}
Sorbonne Paris Cit\'e, UMR 7586\\
Case Postale 7012\\
F-75251 Paris CEDEX 13\\ France}
\email{regis.delabreteche@imj-prg.fr}

\author{D. Fiorilli}
\address{D\'epartement de math\'ematiques et de statistique, Universit\'e d'Ottawa, \newline
\rule[0ex]{0ex}{0ex}\hspace{8pt} 585 King Edward, Ottawa, Ontario, K1N 6N5, Canada}
\email{daniel.fiorilli@uottawa.ca}
 \dedicatory{\`A la m\'emoire de notre ami Kevin Henriot}

\maketitle

\begin{abstract}

We give estimates for the first two moments of arithmetical sequences in progressions.  Instead of using the standard approximation, we work with a generalization of Vaughan's major arcs approximation which is similar to that appearing in earlier work of Browning and Heath-Brown on norm forms. 
We apply our results to the sequence $\tau_k(n)$, and obtain unconditional results in a wide range of moduli.


\end{abstract}

\section{Introduction}

Given a sequence $\A = \{ f(n) \}_{n\geq 1}$ of real numbers, a natural measure of its irregularity in progressions modulo $q$ is the probabilistic variance
\begin{equation}
\sum_{1\leq a \leq q} \Big(\sum_{\substack{n\leq x \\ n\equiv a \bmod q}} f(n)-\frac 1{\phi(q/(q,a))}\sum_{\substack{n\leq x \\ (n,q)=(a,q) }} f(n)\Big)^2. 
\label{eqdefvarf}
\end{equation} 
This quantity has been widely studied in the literature for many interesting arithmetical sequences (see for example \cite{Mo,Ho,H75,V98,V05,N15,KR16,HS16}). Notably, Rodgers and Soundararajan \cite{RS16} have recently established an asymptotic for the $k$-th divisor function $f(n)=\tau_k(n)$, on average over moduli $q$ in certain ranges. Their results confirm predictions coming from function fields \cite{KRRR15}, and are related to the estimation of moments of Dirichlet $L$-functions.

The quantity \eqref{eqdefvarf} is the probabilistic variance of $f(n)$, in the sense that the approximation
$$ \frac 1{\phi(q/(q,a))}\sum_{\substack{n\leq x \\ (n,q)=(a,q) }} f(n) $$
is the quantity depending only on $(a,q)$ that minimizes it. One can ask whether there exists a natural approximation that depends more strongly on $a$
which results in a smaller variance. In the case of the sequence of primes, that is $f(n) = \Lambda(n)$, this was shown possible by Vaughan \cite{V03a,V03b}, by considering the contribution of the major arcs in an application of the circle method. 

In the current paper we work with a generalization of Vaughan's approximation to general arithmetical sequences $\A = \{ f(n) \}_{n\geq 1}$. Such an approximation was used in the work of Browning and Heath-Brown \cite[Section 4]{BHB12}, and was crucial in order to obtain an asymptotic expression for the bilinear forms of trace functions they considered.
The resulting major arcs approximations $\rho_R(x;q,a),$ $\rho^*_R(x;q,a)$, which in our context are built under two natural sieve hypotheses and depend on a parameter $R$,
 are defined below in \eqref{eqdefrho}. Note that these approximations depend only on the distribution of the sequence $f(n)$ in arithmetic progressions, rather than depending intrinsically on $f(n)$. By this we mean that $\rho_R(x;q,a)$ depends only on the quantities $h_j(d)$ appearing in Hypothesis \ref{hyphd} below, which measure the size of $\A$ in the arithmetic progression $0\bmod d$. Before we give our general results let us consider the case of the sequence $f(n)=\tau_k(n)$, the $k$-th divisor function, with $k\geq 2$. We obtain results for the first two moments. 

\begin{theorem}[First moment of $\tau_k$] There exist absolute constants $\eta,\eta'>0$ such that for every $k\geq 2$ and in the range $-x/N < a \leq x $, $N+1\leq \min\{x^{1/2-\eta/k}, R\}$, we have the bound
$$\sum_{x/N<q\leq x} \Big(\sum_{\substack{a<n\leq x \\ n\equiv a \bmod q}} \tau_k(n)-\rho^*_R(x;q,a)\Big)\ll  NR(\log R \log x)^{k}  +x^{1-\eta'/k}\tau_2(|a|).$$

\label{thmomenttauk1} 
\end{theorem}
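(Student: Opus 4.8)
The plan is to verify the two sieve hypotheses for $f=\tau_k$ --- this is where the real work lies --- and then to run the major-arc machinery; in the regime at hand ($q$ close to $x$) the underlying mechanism is divisor reciprocity, which I describe directly below.

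\emph{Local data.} For $d\ge 1$ one has $\sum_{n\le x,\ d\mid n}\tau_k(n)=\sum_{m\le x/d}\tau_k(dm)$, and the Dirichlet series $\sum_{m\ge1}\tau_k(dm)m^{-s}$ factors as $\zeta(s)^{k}g_d(s)$, where $g_d(s)=\prod_{p\mid d}\bigl((1-p^{-s})^{k}\sum_{b\ge0}\tau_k(p^{v_p(d)+b})p^{-bs}\bigr)$ is a finite Euler product of polynomials in $p^{-s}$, entire in $s$ and of controlled size for $\Re s\ge 1/2$. A contour shift past the pole of order $k$ at $s=1$ (equivalently, the hyperbola method applied to $\tau_k=\tau_{k-1}*1$) then yields
\[
\sum_{\substack{n\le x\\ d\mid n}}\tau_k(n)=\sum_{0\le j\le k-1}h_j(d)\,M_j(x)+O_{k,\varepsilon}\bigl(\tau_k(d)\,d^{\varepsilon}(x/d)^{1-\delta}\bigr),
\]
with $\delta>0$ absolute, $M_j(x)\asymp x(\log x)^{j}$ fixed and $h_j$ explicit multiplicative functions of $\tau_k$-controlled average size. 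This is precisely the form demanded by Hypothesis~\ref{hyphd}, and the companion sieve hypothesis (a uniform upper bound for the same divisor sum, together with control of the tails of the $h_j$) is read off from the same computation.

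\emph{From large moduli to small ones.} The key reciprocity is that, for $-x/N<a\le x$, the conditions $a<n\le x$, $n\equiv a\bmod q$, $x/N<q\le x$ amount, up to boundary terms, to writing $n-a=qm$ with $1\le m\le N+1$; hence summing $S_{\tau_k}(x;q,a):=\sum_{a<n\le x,\ n\equiv a\bmod q}\tau_k(n)$ over this range equals, up to those boundary terms, the sum over $m\le N+1$ of the distribution of $\tau_k$ in the progression $a\bmod m$. Since $\rho^*_R$ is built to reproduce the sieve main terms of exactly these progressions, the quantity on the left of the theorem splits into two parts. The first is the discrepancy between $\rho^*_R$ summed over the range and those main terms; inserting the explicit $h_j$ and the truncation level $R$ bounds it by $\ll NR(\log R\log x)^{k}$. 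The second is the error from replacing $\tau_k$ by its sieve main term in each of the $\le N+1$ progressions $a\bmod m$; here the benign estimate above does not suffice, and one invokes the companion hypothesis together with the restriction $N+1\le x^{1/2-\eta/k}$ and a bound for the exponential sums $\sum_{n\le x}\tau_k(n)e(n\theta)$ with $\theta$ away from rationals of small denominator, obtaining $\ll x^{1-\eta'/k}$; the weight $\tau_2(|a|)$ enters because disentangling the $a$-dependent phases forces a divisor sum over $e\mid a$.

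\emph{Main obstacle.} The hard part is this second, oscillatory piece: obtaining a power saving that is uniform in $k$ and valid for moduli as large as $x^{1/2-\eta/k}$ requires genuine input on the exponential sums of $\tau_k$, and it is the strength of that input that pins down the admissible $\eta,\eta'$. By contrast the main-term side is essentially a contour-integration exercise, since it only ever involves the trivial residue class $0\bmod d$. A secondary, purely bookkeeping difficulty is to propagate the $(\log R\log x)^{k}$ power and the $\tau_2(|a|)$ dependence cleanly through the reciprocity and truncation steps.
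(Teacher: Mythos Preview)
Your overall architecture is right: verify the two hypotheses for $\tau_k$, then run the general first-moment argument, and the latter is indeed a divisor-reciprocity swap $q\leftrightarrow s$ with $n-a=qs$. The local computation (your ``local data'') matches the paper's Proposition~\ref{proptauk}, and your bound $NR(\log R\log x)^k$ for the $\rho^*_R$-truncation piece is exactly what falls out of $N(\log R)u_1^*(x,R)$ once one has the estimate \eqref{majhj*muq}.

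Where you diverge is in the ``second, oscillatory piece.'' You frame the power saving $x^{1-\eta'/k}$ as coming from bounds on $\sum_{n\le x}\tau_k(n)e(n\theta)$ on minor arcs. The paper does no such thing. The exponential-sum discussion in Section~\ref{section Vaughan} is purely heuristic, motivating the shape of $\rho^*_R$; the actual proof never estimates $E_{\tau_k}(x;\theta)$. Instead, after the reciprocity step one is left with the discrepancy of $\tau_k$ in progressions $a\bmod s$ with $s\le N+1\le x^{1/2-\eta/k}$, and this is handled directly by Hypothesis~\ref{hypequir} (via Corollary~\ref{corBV}). For $\tau_k$ that hypothesis is supplied as a black box by the Bombieri--Friedlander--Iwaniec theorem (or Motohashi's induction), giving $L(x)=x^{c/k}$ and $Q(x)=x^{1/2-c/k}$. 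This is the genuine arithmetic input, and it lives on the major-arc side, not the minor-arc side. Your proposed route through pointwise exponential-sum bounds would amount to re-deriving a version of BFI inside the proof; it is not obviously wrong, but it is a substantially harder path than simply quoting the equidistribution theorem, and you give no indication of how to carry it out uniformly in~$k$.

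A smaller point: the factor $\tau_2(|a|)$ does not arise from ``disentangling $a$-dependent phases.'' It comes from the gcd bookkeeping in Corollary~\ref{corBV}: when one Möbius-inverts over $d\mid q/(q,a)$ and sums over $q$, the sum $\sum_{m\mid a}\sum_{q\le Q}\tau(q/m)/\phi(q/m)$ produces the $\tau(a)$. No phases are involved at that stage.
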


\begin{theorem}[Variance of $\tau_k$]\label{thmoment2tauk} 
There exists an absolute constant $\eta>0$ such that for every $k\geq 2$ and in the range $N \leq R\leq x^{\eta/k}$ 
we have
\begin{multline}\label{estDeltatauk}\frac {2N}x\sum_{\frac x{2N}<q\leq  \frac xN} \sum_{1\leq a \leq q} \Big(\sum_{\substack{n\leq x \\ n\equiv a \bmod q}} \tau_k(n)-\rho_R(x;q,a)\Big)^2 =   \\
C_k x \frac{(\log x)^{k^2-1}}{(k^2-1)!} \Big(1-\Big(\frac{\log R}{\log x} \Big)^{(k-1)^2}P_k\Big(\frac{\log R}{\log x} \Big)+ \frac 1{\log x}Q_k\Big( \frac 1{\log x}\Big)\Big) + O\big( x(\log x)^{2k-2}(\log R)^{k^2-2k}\big), 
\end{multline}
where $Q_k$ and $P_k$ are polynomials of degree $k^2-2$ and $2k-2$ respectively, which are defined in Section \ref{section variance tauk}. Also
$$ C_k := \prod_p \Big( 1-\frac 1p \Big)^{k^2}\Big( \sum_{\nu\geq 0 }\frac{  \tau_{k}(p^{\nu})^2}{ p^{\nu}}\Big) . $$
\end{theorem}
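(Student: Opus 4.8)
The plan is to reduce the second moment of $\tau_k(n) - \rho_R(x;q,a)$ to a sum of three pieces: the second moment of the sequence itself, the second moment of the approximation, and a cross term; then evaluate each using the general machinery developed earlier in the paper together with explicit computations for $\tau_k$. First I would expand the square, obtaining
\[
\sum_{a} \Big(\sum_{\substack{n\le x\\ n\equiv a}} \tau_k(n)\Big)^2 - 2\sum_a \Big(\sum_{\substack{n\le x\\ n\equiv a}} \tau_k(n)\Big)\rho_R(x;q,a) + \sum_a \rho_R(x;q,a)^2,
\]
and average over $q \in (x/2N, x/N]$. The diagonal in the first term, $\sum_{n_1\equiv n_2 \bmod q}\tau_k(n_1)\tau_k(n_2)$, is governed by the correlation sums $\sum_{n\le x}\tau_k(n)\tau_k(n+hq)$; after summing over $q$ one is led to the shifted-convolution-type quantity $\sum_{m\le x}\tau_k(n)\tau_k(m)$ with $q\mid n-m$, whose main term produces the factor $C_k x(\log x)^{k^2-1}/(k^2-1)!$. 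This is the same local constant $C_k = \prod_p(1-1/p)^{k^2}\sum_\nu \tau_k(p^\nu)^2 p^{-\nu}$ that appears in the classical additive-divisor and variance asymptotics.

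Next I would use the explicit shape of $\rho_R(x;q,a)$ from \eqref{eqdefrho}: it is a truncated main term built from the multiplicative data $h_j(d)$ attached to $\tau_k$, namely the quantities counting $\tau_k(n)$ over $n\le x$ with $d\mid n$, which for $\tau_k$ are themselves polynomials in $\log x$ of degree $k-1$ with multiplicative coefficients. Thus $\rho_R(x;q,a)$ is, up to the parameter-$R$ truncation, a Ramanujan-type expansion $\sum_{d\le R}(\text{coefficient})\,c_d(a)$, and its second moment $\sum_a \rho_R^2$ and the cross term $\sum_a \rho_R\cdot(\text{counting function})$ both reduce, via orthogonality of Ramanujan sums, to Euler products and divisor sums restricted by $d\le R$. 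The point of the $(\log R/\log x)^{(k-1)^2}P_k(\log R/\log x)$ term is precisely this truncation: the full (untruncated) main term would exactly cancel the principal term of the self-correlation, leaving a defect that is a polynomial in $\log R/\log x$ of the stated degrees, with the leading behaviour $(k-1)^2$ coming from the codimension of the relevant variety of coincidences (this matches the Rodgers--Soundararajan/function-field heuristic). The secondary polynomial $Q_k(1/\log x)$ collects lower-order terms in the Perron/contour analysis of the divisor Dirichlet series $\zeta(s)^{k^2}$ near $s=1$.

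The error term $O(x(\log x)^{2k-2}(\log R)^{k^2-2k})$ should come from two sources, both already anticipated by the general results: the off-diagonal error in the correlation sums for $\tau_k$ (a Bombieri--Vinogradov-type or Motohashi-type input, clean because $q$ is of size essentially $x/N$ and $R\le x^{\eta/k}$ is small), and the error in approximating the truncated main terms by their contour-integral values. The main obstacle I expect is the bookkeeping in the cross term: one must show that the cross term and the self-moment of $\rho_R$ combine so that the ``bad'' part of the principal term cancels exactly and only the polynomial defect $1-(\log R/\log x)^{(k-1)^2}P_k(\cdots)$ survives; this requires matching the Euler factors in the correlation-sum main term against those in $\rho_R$ prime by prime, and then identifying the resulting $d$-sum (restricted to $d\le R$, with $\tau_k$-type weights) as a polynomial in $\log R$ via a Selberg--Delange or Landau-type argument. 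Controlling the degree and leading coefficient of $P_k$ and $Q_k$ — which is where the arithmetic of $\tau_k$ enters most delicately — is the technical heart; the rest is an application of the general moment estimates established earlier in the paper, specialized via the divisor-function values of the sieve data $h_j(d)$.
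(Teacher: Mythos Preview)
Your decomposition into three separate pieces --- the self-correlation $\sum_a(\sum_{n\equiv a}\tau_k(n))^2$, the cross term, and the $\rho_R$-self-moment --- is not the route the paper takes, and for a good reason: evaluating the first piece on its own would force you through the additive divisor problem for $\tau_k$. When $q\in(x/2N,x/N]$ the off-diagonal part of $\sum_{n_1\equiv n_2\bmod q}\tau_k(n_1)\tau_k(n_2)$ is \emph{not} small relative to the diagonal; it contributes a genuine main term, and extracting it with the precision you need amounts to an asymptotic for $\sum_{n\le x}\tau_k(n)\tau_k(n+h)$ on average over shifts. For $k\ge 3$ no such result is available, so your first piece cannot be evaluated as stated.

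The paper sidesteps this entirely by never separating the three terms. It works with $\Delta_R(n)=\tau_k(n)-F_R(n)$ from the start and observes (Theorem~\ref{thmoment2}) that
\[
\sum_{x/2N<q\le x/N}\sum_{a}\Big(\sum_{n\equiv a}\Delta_R(n)\Big)^2
=(\text{number of }q)\cdot\|\Delta_R\|_2^2+\text{(off-diagonal)},
\]
where now the off-diagonal equals $\sum_n\Delta_R(n)M_1(x,N,R;n)$ and is negligible precisely because the \emph{first moment} $M_1$ is small (Theorem~\ref{thmoment1}). Subtracting $F_R$ before squaring is what kills the off-diagonal; your ordering (square first, then subtract) loses this cancellation. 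Once the problem is reduced to $\|\Delta_R\|_2^2=\|\tau_k\|_2^2-2\langle\tau_k,F_R\rangle+\|F_R\|_2^2$, no shifted convolutions are needed: $\langle\tau_k,F_R\rangle$ and $\|F_R\|_2^2$ are computed using only Hypothesis~\ref{hyphd} (the behaviour of $\sum_{d\mid n}\tau_k(n)$) and partial summation, and they combine to give $\|\tau_k\|_2^2-\sum_{r\le R}\phi(r)^{-1}\int(\sum_j(h_j\ast\mu)(r)u_j)^2$. The evaluation of this last sum over $r\le R$ via the Dirichlet series $F(s,s_1,s_2)$ (Theorem~\ref{thmomenttauk2}) is where the polynomial $P_k$ and the factor $(\log R/\log x)^{(k-1)^2}$ emerge, and your intuition about that part is essentially right; but you will not reach it along the path you describe.
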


\begin{remark}
Based on their results on the function field counterpart, Keating, Rodgers, Roddity-Gershon and Rudnick \cite{KRRR15} have made the following conjecture on the probabilistic variance:
$$\sum_{\substack{1\leq a \leq q \\ (a,q)=1}} \Big(\sum_{\substack{n\leq x \\ n\equiv a \bmod q}} \tau_k(n)-\frac 1{\phi(q)}\sum_{\substack{n\leq x \\ (n,q)=1}} \tau_k(n)\Big)^2 \sim  a_k(q)\gamma_k(c) x (\log q)^{k^2-1},  $$
where $x=q^{c+o(1)}$,
$$  a_k(q):= \prod_{p\mid q}\Big( 1-\frac 1p \Big)^{k^2}\prod_{p\nmid q} \Big( 1-\frac 1p \Big)^{k^2}\Big( \sum_{\nu\geq 0 }\frac{  \tau_{k}(p^{\nu})^2}{ p^{\nu}}\Big),$$
and 
$$ \gamma_k(c)= \frac 1{k!G(k+1)^2} \int_{[0,1]^k} \delta(w_1+...+w_k-c) \Delta(w)^2 d^kw. $$
Here $\delta$ is the Dirac delta-function, $\Delta(w):= \prod_{i<j}(w_i-w_j)$ is the Vandermonde determinant, and $G(k+1)=(k-1)!(k-2)!\cdots 1!$. On average over $q\asymp Q$ and for  $\tfrac{\log x}{\log Q} \rightarrow c\in [\delta,1+\tfrac{2}k -\delta]$, their conjecture was confirmed by the work of Rodgers and Soundararajan \cite{RS16}. In the current paper we are interested in the range $c\in [1,1+O(\tfrac 1k))$, in which \cite[Lemma 6]{KRRR15} implies that
$$\gamma_k(c)= \frac{c^{k^2-1}}{(k^2-1)!} +R_k(c),$$
where $R_k(c)$ is the polynomial defined by
$$R_k(c):= {\rm Res}_{z=0}{\rm Res}_{\substack{s_1=0 \\ s_2=0}} e^{z} e^{c(s_1+s_2-z)} \frac{(s_1-z)^k(s_2-z)^k}{z^{k^2} s_1^ks_2^k (s_1+s_2-z)^{k^2}}.$$
We will see in Remark \ref{remark following 6.1} that the polynomial $P_k$ in Theorem \ref{thmoment2tauk} is exactly of this form, and thus our results are very similar to their function field counterparts\footnote{The slight difference in the constant $a_k(q)$ versus our constant $C_k$ comes from the fact that we are considering all residue classes $a\bmod q$, rather than requiring $(a,q)=1$.}. In particular if we take $R=N=x/Q$ in Theorem 1.2 then we recover the expression obtained 
in \cite[Theorem 1]{RS16}. Indeed, setting $v:=\log R/\log x = 1-1/c$, we have that \eqref{eqcalculgammak} implies the formula
\begin{align*}(\log x)^{k^2-1}\frac{1-v^{(k-1)^2}P_k(v)}{(k^2-1)!}&
=\gamma_k(c)(\log q)^{k^2-1}.
\end{align*}
As for larger values $R>N$, we will see that by monotonicity (see Remark \ref{remark after variance general} (3) below), they will result in a smaller variance. 


%

\end{remark}

We now describe our general results. We will consider arithmetical sequences $\mathcal A=\{f(n)\}_{n\geq 1}$ satisfying two hypotheses. The first is analogous to a standard sieve hypothesis, and the second describes the distribution of $\mathcal A$ in arithmetic progressions. These hypotheses include  two real functions $Q(x)$ and $L(x)$; for now one can think of $Q(x)$ as a small power of $x$ and of $L(x)$ as a power of $(\log x)$.

\begin{hypothesis}[Arithmetical Sequence]
\label{hyphd}
There exists an integer $J\geq 0$, arithmetical functions\footnote{The $h_j$ need not be multiplicative.} $h_j$ and monotonic smooth functions $u_j:\mathbb R_{\geq 0} \rightarrow \mathbb R $ with $0\leq j< J$ such that uniformly for $1\leq d\leq x$ we have
$$ \A_d(x):= \sum_{\substack{n\leq x \\ d\mid n}} f(n)  =\sum_{0\leq j< J} \frac{h_j(d)}d U_j(x) + O\left( \frac{U_0(x)}{L(x)(\log Q(x))^2}\right), $$
where $U_j(x):=\int_0^x u_j.$
\end{hypothesis}

For ease of notation, we set $u_j(t)=0$ for $t<0$.
\begin{hypothesis}[Equidistribution in arithmetic progressions]
\label{hypequir}
For $x\geq 1$ we have the bound
$$ \sum_{q\leq Q(x)} \max_{y\leq x} \max_{1\leq a\leq q} \Big| \sum_{\substack{n\leq y \\ n\equiv a \bmod q }}f(n)- \frac 1{\phi(q/(q,a))} \sum_{\substack{ n\leq y \\ (n,q)=(a,q)}} f(n) \Big| \ll \frac {U_0(x)}{L(x)}.$$

\end{hypothesis}

 Under these hypotheses, we define the major arcs contributions
$$F_R(n):=\sum_{0\leq j< J}u_j(n)\sum_{ r\leq R }\frac{(h_j*\mu)(r)  }{
\phi(r/(r,n))}\mu\Big(\frac{r}{(r,n)}\Big),$$
and the approximations
\begin{equation}
\rho_R(x;q,a):= \sum_{\substack{ n\leq x \\ n\equiv a \bmod q}}F_R(n); \hspace{1cm}\rho^*_R(x;q,a):= \sum_{\substack{ a<n\leq x \\ n\equiv a \bmod q}}F_R(n).
\label{eqdefrho}
\end{equation}

We can now give an estimate for the first two moments of the difference 
$ \Delta_R(n):=f(n)-F_R(n)$ in arithmetic progressions.

\begin{theorem}\label{thmoment1} Assume Hypotheses \ref{hyphd} and \ref{hypequir}.
In the ranges $-x/N < a \leq x $, $N+1\leq \min\{Q(x), R\}$,   we have
$$\sum_{x/N<q\leq x} \Big(\sum_{\substack{a<n\leq x \\ n\equiv a \bmod q }}f(n)-\rho^*_R(x;q,a)\Big) 
\ll N(\log R)u_1^*(x,R) +\tau(a)\frac{U_0(x)}{L(x)},$$
where
\begin{equation}\label{defu1*xR} u_1^*(x,R):=\sum_{0\leq j< J} (|u_j(x)|+1) \sum_{\substack{ r\leq R}}\frac{|(h_j*\mu)(r) |r }{
\phi(r )}.
\end{equation}
The same bound holds for the sum over the moduli $q\leq N$. Moreover, for $a\geq 0$ the second condition can be relaxed to $N\leq \min\{Q(x), R\}$.
\end{theorem}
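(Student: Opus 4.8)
The plan is to open the inner sum by orthogonality, swap the order of summation, and match the sum over residue classes $a \bmod q$ against the "expected" main term provided by Hypotheses \ref{hyphd} and \ref{hypequir}. First I would write
\[
\sum_{x/N<q\leq x}\Big(\sum_{\substack{a<n\leq x\\ n\equiv a\bmod q}}f(n)-\rho^*_R(x;q,a)\Big)
= \sum_{x/N<q\leq x}\sum_{\substack{a<n\leq x\\ n\equiv a\bmod q}}\big(f(n)-F_R(n)\big),
\]
and then the key structural observation is that $F_R(n)$ is built precisely so that, summing $f(n)-F_R(n)$ over $n$ in a \emph{single} residue class and then over $q$, the main terms telescope: the inner truncated Möbius sum $\sum_{r\le R}(h_j*\mu)(r)\phi(r/(r,n))^{-1}\mu(r/(r,n))$ is the partial sum of the local densities $h_j(d)/d$ that appear in $\A_d(x)$. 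So I would first establish, as a lemma, an exact identity (or near-identity with an explicit error) expressing $\sum_{n\equiv a(q)}F_R(n)$ in terms of the quantities $\A_d(y)$ via Möbius inversion over the divisors $d\mid q$, and then subtract. The natural device is to detect $n\equiv a\bmod q$ against $d\mid n$ using that for $(a,q)=g$ the condition is governed by sums over $r\mid q$; after rearranging, the difference becomes $\sum_{d} (\text{coefficient})\big(\A_d(x)-\sum_j \tfrac{h_j(d)}d U_j(x)\big)$ plus a tail from the truncation $r\le R$.

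Second, I would treat the two resulting pieces separately. The genuine error terms coming from Hypothesis \ref{hyphd}, summed over $q$, contribute something of size $\ll \sum_{q}\tau(q)\cdot U_0(x)/(L(x)(\log Q(x))^2)$; since the relevant $q$ satisfy $q\le x$ but the Hypothesis \ref{hyphd} error is uniform, and Hypothesis \ref{hypequir} controls the large-$q$ regime with total error $\ll U_0(x)/L(x)$, these combine to the $\tau(a)U_0(x)/L(x)$ term in the statement (the $\tau(a)$ entering through the number of admissible divisors $d$ tied to $(a,q)$). The truncation tail — the terms with $r>R$ that $F_R$ omits — is where the main term $N(\log R)u_1^*(x,R)$ comes from: here one bounds $\sum_{x/N<q\le x}\sum_{n\equiv a(q), a<n\le x}$ of the relevant coefficients trivially, using that the number of $n\le x$ in a class mod $q$ with $q>x/N$ is $\ll N$, and that $\sum_{r\le R}|(h_j*\mu)(r)|r/\phi(r)$ and the factor $\log R$ account for the sum over $r$ and over $q$ (each $r$ contributing $O(\log R)$ worth of compatible moduli through the $d\mid q$ structure). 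The function $u_1^*(x,R)$ in \eqref{defu1*xR} is exactly the right weight because $|u_j(x)|+1$ bounds the density $u_j$ on $(a,x]$ (using monotonicity of $u_j$ and $u_j(t)=0$ for $t<0$).

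Third, I would handle the bookkeeping for the secondary claims: the identical bound for $\sum_{q\le N}$ follows by the same argument with the roles of "small box" and "long progression" swapped (now $q\le N$ so there are $\ll x/q$ terms, but $q\le N\le R$ keeps the $r$-sum in range), and the relaxation to $N\le\min\{Q(x),R\}$ when $a\ge 0$ is simply the observation that the endpoint issue forcing $N+1$ (namely that the class $a\bmod q$ might or might not contain the boundary point, affecting $F_R$ at $n=a$) disappears when $a\ge 0$ because $\rho^*_R$ already excludes $n=a$. The main obstacle I anticipate is the first step: getting a clean Möbius-inversion identity for $\sum_{n\equiv a(q)}F_R(n)$ that correctly tracks the $(r,n)$ and $(q,a)$ gcd conditions simultaneously, so that after subtracting $\sum_{n\equiv a(q)}f(n)$ one lands exactly on the $\A_d$-errors plus the $r>R$ tail with no spurious terms; the interplay of the two gcd's and the condition $a<n\le x$ (versus $n\le x$) makes this the delicate computation, whereas the subsequent estimates are routine once the identity is in hand.
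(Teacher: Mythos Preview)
Your plan has a genuine structural gap. The moduli in the outer sum satisfy $x/N<q\le x$, hence are all \emph{large}; in particular they exceed $Q(x)$, so neither Hypothesis~\ref{hyphd} (which bounds the error in $\A_d(x)$ for a single $d$) nor Hypothesis~\ref{hypequir} (which only runs over $q\le Q(x)$) applies to the progression $n\equiv a\bmod q$ directly. Your claim that ``Hypothesis~\ref{hypequir} controls the large-$q$ regime'' is simply not available. The paper's key move, which your plan omits, is the divisor switch (hyperbola trick): for $q>x/N$ and $a<n\le x$ with $n\equiv a\bmod q$, write $n-a=qs$; then $1\le s<N-aN/x$, and one converts both $M_{11}=\sum_q\A^*(x;q,a)$ and $M_{12}=\sum_q\rho^*_R(x;q,a)$ into sums over the \emph{small} moduli $s$. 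Now $s\le N\le Q(x)$, so Corollary~\ref{corBV} (a consequence of both hypotheses) gives $\A^*(x;s,a)\approx\sum_j g_j(s,a)U_j(x)$ with total error $\tau(a)U_0(x)/L(x)$, and this is where that term actually originates.

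Your identification of the source of $N(\log R)\,u_1^*(x,R)$ is also off: it is \emph{not} a truncation tail from $r>R$. In the paper, after the switch to small $s$, one expands $F_R(n)$ via Ramanujan sums and splits according to whether $r\mid s$ or $r\nmid s$. When $r\nmid s$ the inner sum is an exponential sum $\sum_k u_j(a+ks)e(cks/r)$, bounded (using monotonicity of $u_j$) by $u_j^*(x)/\|cs/r\|$; summing over $c,r,s$ produces $N(\log R)\,u_1^*(x,R)$. When $r\mid s$, the arithmetic identity $\sum_{r\mid s}(h_j*\mu)(r)\,\mu(r/(r,a))/\phi(r/(r,a))=s\,g_j(s,a)$ makes the main term of $M_{12}$ match that of $M_{11}$ exactly---provided $R$ is at least every $s$ that occurs, i.e.\ $R\ge N-aN/x$. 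Since $-x/N<a$ gives $N-aN/x<N+1$, this is why the condition is $N+1\le R$; for $a\ge 0$ one has $N-aN/x\le N$, whence $N\le R$ suffices. Your explanation of the relaxation (an endpoint issue at $n=a$) is not the correct mechanism.
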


\begin{remark} 
\
\begin{enumerate}
\item
We recover \cite[Theorem 1.5]{F15} in the case $f =\Lambda$, $h_0=\delta$, $J=1$, $u_0=1$, for which Hypothesis \ref{hyphd} is the prime number theorem and Hypothesis \ref{hypequir} follows from the Bombieri--Vinogradov Theorem, with $L(x)=(\log x)^A$ and $Q(x)=\sqrt{x}/(\log x)^B$. 
 
\item
As in \cite{F15}, we can hope to obtain an asymptotic estimate for the sum over moduli $q\leq x/N$. This would require additional hypotheses on $f$ in order to estimate certain sums of arithmetical functions. We preferred not to pursue this further.

\item Comparing with \cite[Theorem 4.1*]{F13}, we see that the first moment in Theorem \ref{thmoment1} is of much smaller size. The reason for this improved bound is that the approximation $\rho^*(x;q,a)$ captures the discrepancies of the distribution of $f(n)$ in arithmetic progressions. 

\end{enumerate}
\end{remark}

For an arithmetical function $f$, define the $2$-norm
$$\|f\|_2^2:=\sum_{\substack{ n\leq x }}f(n)^2.$$ 
We will see that the variance is dominated by the $2$-norm of $ \Delta_R(n)$, and we will give an estimate for this quantity.

\begin{theorem}\label{thmoment2} 
Assume Hypotheses \ref{hyphd} and \ref{hypequir} and recall the notation \eqref{defu1*xR}. If \linebreak$ N\leq \min\{Q(x), R,U_0(x)/(L(x)u_1^*(x,R) (\log R) ) \} $, then we have
\begin{equation}\label{estVDelta}\sum_{\frac x{2N}<q\leq \frac xN} \sum_{1\leq a\leq q} \Big(\sum_{\substack{a<n\leq x \\ n\equiv a \bmod q }}f(n)-\rho^*_R(x;q,a)\Big)^2 = \frac x{2N} \|\Delta_R\|_2^2 \Bigg\{ 1  +O\left(\frac Nx+\frac{NU_0(x)(\log x)^{\frac 32} }{x^{\frac 12}\|\Delta_R\|_2 L(x)} \right)\Bigg\}.\end{equation}
Moreover, for $R\geq 1$ we have
\begin{equation} \begin{split}
\|\Delta_R\|_2^2 &= \|f\|_2^2 - \sum_{\substack{ r\leq R  }}  \frac{1}{\phi(r)} \int_{0}^{x}
\Big(\sum_{0\leq j  < J}(h_j*\mu)(r) u_j(t)\Big)^2\d t\\&\quad +O \left(u_2^*(x,R)^2+ \frac{U_0(x)u_2^*(x,R)}{L(x)(\log Q(x))^2}\right), \end{split}
\label{estM2DELTA}
\end{equation}
where 
$$ u_2^*(x,R)  :=\sum_{0\leq j  < J}(|u_j(x)|+1)\sum_{\substack{ r\leq R}}\frac{|(h_j*\mu)(r) |r^2}{
\phi(r )^2} 
.$$
\end{theorem}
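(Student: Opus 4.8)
The plan is to prove the two displayed estimates in sequence, starting with the variance formula \eqref{estVDelta}, which follows a standard "diagonal + off-diagonal" decomposition, and then tackling the harder $2$-norm formula \eqref{estM2DELTA}. For \eqref{estVDelta}, I would expand the square and write each inner sum $\sum_{a<n\le x,\,n\equiv a\bmod q}\Delta_R(n)$; the left-hand side then becomes a double sum over $q$ in a dyadic range and over residues $a$, which after opening the square splits into a main (diagonal) term collecting pairs $n_1\equiv n_2\bmod q$ with $n_1=n_2$, and an off-diagonal term. The diagonal term, summed over $q\in(x/2N,x/N]$, contributes $\sum_{n\le x}\Delta_R(n)^2\cdot\#\{q\in(x/2N,x/N]\}=\frac{x}{2N}\|\Delta_R\|_2^2(1+O(N/x))$ after being careful about the constraint $a<n$ (which is where the $N/x$ losses and the shift in $\rho^*$ versus $\rho$ enter). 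The off-diagonal term, i.e. the contribution of pairs $n_1\ne n_2$ with $q\mid n_1-n_2$, is bounded by Cauchy--Schwarz together with the first-moment estimate from Theorem \ref{thmoment1} (applied with the relevant range of parameters) to control $\sum_q|\sum\Delta_R(n)|$; this produces the error term $NU_0(x)(\log x)^{3/2}/(x^{1/2}\|\Delta_R\|_2L(x))$, the $(\log x)^{3/2}$ being the usual cost of a Cauchy--Schwarz over $q$ combined with a divisor-sum bound. The hypothesis $N\le U_0(x)/(L(x)u_1^*(x,R)(\log R))$ is exactly what makes the first-moment bound $N(\log R)u_1^*(x,R)$ smaller than the main term, so that this manipulation is legitimate.

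For \eqref{estM2DELTA}, I would expand $\|\Delta_R\|_2^2=\sum_{n\le x}(f(n)-F_R(n))^2=\|f\|_2^2-2\sum_{n\le x}f(n)F_R(n)+\sum_{n\le x}F_R(n)^2$ and estimate the cross term and the $F_R$-square term separately. For the cross term, insert the definition $F_R(n)=\sum_j u_j(n)\sum_{r\le R}\frac{(h_j*\mu)(r)}{\phi(r/(r,n))}\mu(r/(r,n))$, swap the order of summation to sum over $r\le R$ first, and group $n$ according to $(r,n)$; after a further substitution this reduces $\sum_n f(n)F_R(n)$ to sums of the shape $\sum_{d\mid r'}(\dots)\A_d(x)$-type quantities, at which point Hypothesis \ref{hyphd} is applied to replace $\A_d(x)$ by $\sum_j h_j(d)U_j(x)/d$ plus its error. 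Abel summation then converts $u_j(n)U_{j'}(x)$-type products into the integral $\int_0^x(\sum_j(h_j*\mu)(r)u_j(t))^2\,\d t$ appearing in \eqref{estM2DELTA}; the accumulated error from Hypothesis \ref{hyphd}, summed over $r\le R$ weighted by $|(h_j*\mu)(r)|r/\phi(r)$, is $U_0(x)u_2^*(x,R)/(L(x)(\log Q(x))^2)$ after matching the weights. For the term $\sum_{n\le x}F_R(n)^2$, expand into a double sum over $r_1,r_2\le R$; the "diagonal" in the sense of $\mathrm{lcm}(r_1,r_2)$ reproduces $\sum_{r\le R}\frac1{\phi(r)}\int_0^x(\dots)^2\d t$ with the correct sign, and the remaining cross terms, bounded trivially using $\phi(r/(r,n))\ge \phi(r)/r$-type inequalities and $|u_j(n)|\le|u_j(x)|+1$ (monotonicity), are absorbed into $O(u_2^*(x,R)^2)$; this is precisely why $u_2^*$ carries an $r^2/\phi(r)^2$ weight rather than $r/\phi(r)$.

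The main obstacle I expect is the bookkeeping in the cross term $\sum_{n\le x}f(n)F_R(n)$: one must interchange the $n$-sum with the $r$-sum, correctly track the arithmetic factor $\mu(r/(r,n))/\phi(r/(r,n))$ as a function of $g=(r,n)$, re-express the $n$-sum over a fixed class as $\A$-type sums that Hypothesis \ref{hyphd} can handle, and then verify that the main term assembles exactly into $-\sum_{r\le R}\frac1{\phi(r)}\int_0^x(\sum_j(h_j*\mu)(r)u_j(t))^2\d t$ after combining with the contribution of $\sum F_R(n)^2$ — the factor-of-two cancellation between $-2\sum fF_R$ and $+\sum F_R^2$ being the delicate point. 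A secondary difficulty is ensuring that all error terms genuinely collapse to the two stated shapes $u_2^*(x,R)^2$ and $U_0(x)u_2^*(x,R)/(L(x)(\log Q(x))^2)$; this requires the elementary bound $\sum_{r\le R}|(h_j*\mu)(r)|r/\phi(r)\cdot(\text{something})\ll u_2^*$, i.e. trading one power of $r/\phi(r)$ against the extra factor picked up from summing $1/d$ over $d\mid r$, which I would verify by a direct divisor-sum estimate. I do not anticipate serious trouble with Hypothesis \ref{hypequir}; it is needed only to control the off-diagonal in \eqref{estVDelta} via Theorem \ref{thmoment1}, whose proof already absorbs it.
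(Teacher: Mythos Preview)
Your plan is correct and matches the paper's approach closely. Two small clarifications worth noting: in the off-diagonal for \eqref{estVDelta}, the paper rewrites it as $\sum_{n\le x}\Delta_R(n)\,M_1(x,N,R;n)$ and then applies Cauchy--Schwarz in $n$ (pairing $|\Delta_R(n)|$ against $\tau(n)$, so that $(\log x)^{3/2}$ arises from $(\sum_{n\le x}\tau(n)^2)^{1/2}$), not over $q$; and for \eqref{estM2DELTA} the ``diagonal'' structure in $\|F_R\|_2^2$ is made precise via the orthogonality identity $\sum_{m\mid r,\,m'\mid r'}\kappa(m,r)\kappa(m',r')/[m,m']=\mathbf 1_{r=r'}/\phi(r)$, where $\kappa(m,r):=\sum_{k\ell=m}\mu(r/k)\mu(\ell)/\phi(r/k)$, which is exactly what collapses the double $r,r'$ sum and makes your factor-of-two cancellation go through.
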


\goodbreak
\begin{remark}
\
\label{remark after variance general}
\begin{enumerate}

\item We recover \cite[Theorems 3,4]{V03a} by taking $f=\Lambda$.

\item We recover \cite[Theorem 2]{H75} by taking $f=\mu$ which corresponds for us to a very particular case since $u_1^*(x,R)=u_2^*(x,R)=0$. Indeed, we can make the choices $J=1$, $h_0=0$, $u_0=1$, $L(x)=(\log x)^{B}$, $Q(x)=\sqrt{x}/(\log x)^{A}$, and hence $F_R= 0$.

\item The estimate \eqref{estM2DELTA} is a refinement of \cite[Lemma 6]{BHB12}, which is based on the large sieve.

\item The second term on the right hand side of \eqref{estM2DELTA} is negative and monotonic in $R$. In other words, the larger $R$ is, the more precise the approximation $F_R(n)$ becomes, and the smaller the variance is.

\item  If $J=1$, then the main terms on the right hand side of \eqref{estM2DELTA} become
\begin{equation}
 \sum_{n\leq x} f(n)^2 -  \sum_{\substack{ r\leq R  }}  \frac{(h_0*\mu)(r)^2}{\phi(r)} \int_{0}^{x}  u_0(t)^2\d t. 
 \label{equation variance simple}
\end{equation}
 The ratio between $\|f\|_2^2$ and $\int_{0}^{x}  u_0(t)^2\d t$ is a measure of the irregularity of $f$, and is compensated by the sum over $r$ in such a way that \eqref{equation variance simple} stays positive. 
\end{enumerate}
\end{remark}

\section{General major arcs approximation}
\label{section Vaughan}

The goal of this section is to justify the definition of the major arcs approximations $\rho_R(x;q,a), \rho^*_R(x;q,a)$ given in \eqref{eqdefrho}. Our argument results in an approximation similar to that of Browning and Heath-Brown \cite[Section 4]{BHB12}, and generalizes Vaughan's approximation for $\Lambda(n)$ \cite{V03a,V03b} to general arithmetic sequences $f(n)$ satisfying Hypotheses \ref{hyphd} and \ref{hypequir}. We will combine the circle method with an argument of La Bret\`eche and Granville \cite{BG14}, which determine the resonance of $f(n)$ with the harmonics $e(bn/r)$. 
We define the counting functions
$$  \A (x;q,a):=\sum_{\substack{n\leq x\\n\equiv a\bmod q}}f(n),\qquad \A^* (x;q,a):=\sum_{\substack{a<n\leq x\\n\equiv a\bmod q}}f(n).$$
Recall also that 
$$ \A_d(x)= \sum_{\substack{n\leq x \\ d\mid n}} f(n). $$
We first see that from Hypotheses \ref{hyphd} and \ref{hypequir} we can deduce an asymptotic for  
$\A(x;q,a)$ in terms of the function
\begin{equation}
g_j(q,a):= \frac 1{\phi(q/(q,a))} \sum_{d\mid q/(q,a)} \mu(d) \frac{h_j((q,a)d)}{(q,a)d}. 
\label{eqdefgaq}
\end{equation} 
The argument is due to Granville and Soundararajan \cite{GS}.

\begin{corollary}
Under Hypotheses \ref{hyphd} and \ref{hypequir}, we have uniformly for $a\in \ZZ$ and $x\geq 1$ that
$$ \sum_{q\leq Q(x)} \Bigg| \sum_{\substack{n\leq x \\ n\equiv a \bmod q }}f(n)- \sum_{0\leq j< J}g_j(q,a) U_j(x) \Bigg| \ll \tau(a) \frac {U_0(x) }{L(x)}.$$
\label{corBV}
\end{corollary}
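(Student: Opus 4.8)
The plan is to derive Corollary~\ref{corBV} by decomposing the left-hand sum according to the value of $(q,a)$ and then reducing to Hypotheses~\ref{hyphd} and \ref{hypequir} via the Granville--Soundararajan trick of sieving out the coprimality condition. Write $e=(q,a)$, $q=eq'$, $a=ea'$ with $(a',q')=1$. The inner sum $\sum_{n\le x,\ n\equiv a\bmod q}f(n)$ only sees $n$ with $e\mid n$, say $n=em$, and the congruence becomes $m\equiv a'\bmod q'$ with $(a',q')=1$; meanwhile $g_j(q,a)U_j(x)$ is, by its definition \eqref{eqdefgaq}, exactly the ``expected'' main term one gets by this reduction. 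So the first step is bookkeeping: organize the sum over $q\le Q(x)$ by $e\mid a$ and $q'\le Q(x)/e$, reducing everything to estimating, for each fixed $e$, the quantity $\sum_{q'\le Q(x)/e}\bigl|\sum_{m\le x/e,\ m\equiv a'\bmod q'}f(em)-(\text{main term})\bigr|$.

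Next I would handle the coprime case, i.e. bound $\sum_{q'\le Q}\bigl|\sum_{n\le y,\ n\equiv a'\bmod q',\ (a',q')=1}f(n)-\frac1{\phi(q')}\sum_{n\le y,\ (n,q')=1}f(n)\bigr|$, which is precisely the content of Hypothesis~\ref{hypequir} (with the maxima over $y$ and $a$ discarded), giving $O(U_0(x)/L(x))$. The remaining task is to show that the smooth main term $\frac1{\phi(q')}\sum_{n\le y,\ (n,q')=1}f(n)$ agrees, up to an acceptable error, with $\sum_j g_j(q,a)U_j(x)$. For this one writes the coprimality condition as $\sum_{d\mid (n,q')}\mu(d)$, swaps the order of summation, and applies Hypothesis~\ref{hyphd} to $\A_d(y)=\sum_{n\le y,\ d\mid n}f(n)$ for each $d\mid q'$; the main terms $\sum_j \frac{h_j(d)}{d}U_j(y)$ assemble (after reinstating the factor $e$ and the shift $n=em$, and matching $d\mapsto ed$ inside $h_j$) into exactly $\sum_j g_j(q,a)U_j(x)$ by the definition \eqref{eqdefgaq}, while the error terms contribute $\ll \sum_{d\mid q'}\frac{U_0}{L(\log Q)^2}=\tau(q')\frac{U_0}{L(\log Q)^2}$ per modulus.

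The potential obstacle is controlling this last error term after summing over $q'\le Q$ and then over $e\mid a$: naively $\sum_{q'\le Q}\tau(q')\asymp Q\log Q$, which is far too large, so one must not apply Hypothesis~\ref{hyphd}'s pointwise error to each $d$ individually and then sum trivially. The fix is that the $(\log Q(x))^2$ in the denominator of Hypothesis~\ref{hyphd} is there precisely to absorb the divisor sums: summing $\frac{1}{(\log Q)^2}\sum_{q'\le Q}\tau(q')\cdot\frac1{\phi(q')}$-type weights (the $\phi(q')^{-1}$ coming from the normalization in $g_j$ and in Hypothesis~\ref{hypequir}'s main term) one gets $\sum_{q'\le Q}\tau(q')/\phi(q')\ll (\log Q)^2$, which cancels. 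Tracking the dependence on $e$ then produces the divisor sum $\sum_{e\mid a}1=\tau(a)$ (or a bounded multiple thereof, after a standard $\sum_{e\mid a}1/e$ or $\tau$-convolution estimate), which is the source of the $\tau(a)$ factor in the statement. So the main care needed is arranging the divisor-sum estimates so that the two logarithmic savings in the hypotheses are spent exactly once each, on the $q'$-sum and implicitly nowhere else, and confirming that the $e$-sum genuinely telescopes to $\tau(a)U_0(x)/L(x)$ rather than something larger.
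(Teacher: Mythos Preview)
Your approach is essentially the paper's: split via the triangle inequality into an ``equidistribution error'' handled by Hypothesis~\ref{hypequir} and a ``main term error'' handled by sieving and Hypothesis~\ref{hyphd}, then absorb the resulting $\sum \tau(q')/\phi(q')$ into the $(\log Q)^2$ saving. Your identification of the obstacle and its fix is exactly right.

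The one difference is that you first decompose by $e=(q,a)$ and change variables $n=em$, whereas the paper works directly with $q,a,n$ throughout. This extra step is unnecessary: Hypothesis~\ref{hypequir} is already stated with $\max_{1\le a\le q}$ over \emph{all} residues, with main term $\tfrac{1}{\phi(q/(q,a))}\sum_{(n,q)=(a,q)}f(n)$, so nothing is gained by reducing to the coprime case. In fact your reduction creates a small pitfall you should watch: after writing $n=em$, your ``coprime case'' sum is $\sum_{m\le x/e,\ m\equiv a'\bmod q'} f(em)$, which is \emph{not} the Hypothesis~\ref{hypequir} quantity for modulus $q'$ (that would involve $f(m)$, not $f(em)$). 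You silently switch to $f(n)$ in your displayed coprime bound, which is a slip. Of course the sum equals $\sum_{n\le x,\ n\equiv a\bmod q} f(n)$, so you can simply apply Hypothesis~\ref{hypequir} to the original pair $(q,a)$ --- but then the $e$-decomposition has done nothing. The paper just writes
\[
\frac{1}{\phi(q/(q,a))}\sum_{\substack{n\le x\\ (n,q)=(a,q)}} f(n) \;=\; \frac{1}{\phi(q/(q,a))}\sum_{d\mid q/(q,a)}\mu(d)\,\A_{(q,a)d}(x),
\]
applies the two hypotheses to the two pieces, and bounds $\sum_{q\le Q(x)}\tau\bigl(q/(q,a)\bigr)/\phi\bigl(q/(q,a)\bigr)\ll \tau(a)(\log Q(x))^2$ by grouping over divisors of $a$ at the end. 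Once you drop the change of variables, your argument is identical.
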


\begin{proof}
We have that
$$ 
 \frac 1{\phi(q/(q,a))} \sum_{\substack{ n\leq x \\ (n,q)=(a,q)}} f(n) =  \frac 1{\phi(q/(q,a))} \sum_{\substack{ d\mid q/(q,a)}} \mu(d) \A_{(q,a)d}(x),
$$
and hence
\begin{multline*} \sum_{q\leq Q(x)} \Big| \sum_{\substack{n\leq x \\ n\equiv a \bmod q }}f(n)- \sum_{0\leq j< J}g_j(q,a) U_j(x) \Big|
\\  \leq 
\sum_{q\leq Q(x)} \Big| \sum_{\substack{n\leq x \\ n\equiv a \bmod q }}f(n)- \frac 1{\phi(q/(q,a))} \sum_{\substack{ n\leq x \\ (n,q)=(a,q)}} f(n)  \Big| \\ + \sum_{q\leq Q(x)} \Big| \frac 1{\phi(q/(q,a))} \sum_{\substack{ d\mid q/(q,a)}} \mu(d) \A_{(q,a)d}(x)-  \sum_{0\leq j< J}g_j(q,a) U_j(x) \Big|.
\end{multline*}
The first term is $\ll U_0(x)/L(x)$ by Hypothesis \ref{hypequir}, and by Hypothesis \ref{hyphd} the second is
\begin{align*}&=\sum_{q\leq Q(x)} \Big|\frac 1{\phi(q/(q,a))}    \sum_{\substack{ d\mid q/(q,a)}} \mu(d) \Big(\A_{(q,a)d}(x)- \sum_{0\leq j< J}  \frac{h_j((q,a)d)}{(q,a)d} U_j(x) \Big)\Big| \cr& \quad\ll \frac{U_0(x)}{L(x)(\log Q(x))^2} \sum_{q\leq Q(x)} \frac{\tau(q/(q,a))}{\phi(q/(q,a))}.
\end{align*}
The last sum over $q$ is $$\leq \sum_{m\mid a}\sum_{\substack{ q\leq Q(x)\\ m\mid q}} \frac{\tau(q/m)}{\phi(q/m)}\leq \sum_{m\mid a}\sum_{\substack{ q\leq Q(x)/m}} \frac{\tau(q )}{\phi(q )}\ll \tau(a) (\log Q(x))^2 .$$
\end{proof}

Coming back to the circle method, we use the notations of \cite{BG14} as follows:
$$E_f\Big(x;\frac{b}{r}\Big):= \sum_{n\leq x} e\Big(\frac{bn}{r}\Big) f(n);$$
\begin{equation}\label{defRf}\begin{split}
  R_f(x;a/q)&:=\sum_{1\leq b\leq q}\e\Big(\frac{ab}{
q}\Big)\Big\{\sum_{\substack{n\leq x\\ n\equiv b \bmod q }}f(n)-\frac{1}{ \phi(q/(q,b))}
\sum_{\substack{n\leq x\\(n,q)=(b,q)}}f(n)
\Big\}
\cr&= \sum_{m\mid q}\sum_{\substack{c=1\\(c,m)=1}}^{m}\e\Big(\frac{ac}{
m}\Big)
\Big(\sum_{\substack{n\leq xm/q\\ n\equiv c \bmod m }}
f\Big(\frac{nq}{ m}\Big)-\frac{1}{ \phi(m)}\sum_{\substack{n\leq xm/q\\ (n,m)=1}}
f\Big(\frac{nq}{ m}\Big)  \Big).\end{split} \end{equation}

If $f$ is well-distributed in arithmetic progressions of small moduli, then $R_f(x;a/q)$ will be an error term. More precisely, Hypothesis \ref{hypequir} implies that
\begin{equation}\qquad\qquad
 \sum_{q\leq Q } |R_f(x;a/q)| \ll \frac{Q U_0(x)}{L(x)} \qquad\qquad (Q\leq  Q(x)), 
 \label{eqmajRfsousH1}
\end{equation}
and hence if we have equidistribution in small arithmetic progressions (that is $Q$ is negligible compared to $L(x)$), then $R_f(x;a/q)$ is small on average. In other words, a Siegel-Walfisz hypothesis implies that $R_f(x;a/q)$ is negligible. 

We reproduce \cite[Lemme 3.2]{BG14} and its proof.

\begin{lemma}[\cite{BG14}]\label{lemGene} For $(b,r)=1$ and $x\geq 1$, we have the identity
\begin{align*}E_f\Big(x;\frac{b}{r}\Big)&=\sum_{m\mid r}\frac{\mu(r/m) m }{
\phi(r)}\sum_{\substack{n\leq x\\ m\mid n}}
f(n) +R_f\Big(x;\frac{b}{r}\Big).\end{align*}
\end{lemma}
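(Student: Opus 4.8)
The plan is to start from the definition of the exponential sum $E_f(x;b/r)$ and expand the additive character $\e(bn/r)$ using the standard Gauss-sum/Ramanujan-sum machinery, organized according to the greatest common divisor $(n,r)$. Writing $d=(n,r)$ and $r=dm'$... actually the cleaner route is to split according to $m=r/(n,r)$, so that $n$ runs over residues coprime to $m$ in a suitable sense. Concretely, for each divisor $m\mid r$ collect the integers $n\le x$ with $(n,r)=r/m$; on such $n$ the character $\e(bn/r)$ becomes $\e(bn'/m)$ with $n'=n/(r/m)$ coprime to $m$ (using $(b,r)=1$). This reorganizes $E_f(x;b/r)$ as a sum over $m\mid r$ of twisted sums of $f$ over progressions mod $m$, which is exactly the shape in which the definition \eqref{defRf} of $R_f$ was written (second line).

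The key identity to feed in is the evaluation of the character sum
$$\sum_{\substack{c=1\\(c,m)=1}}^{m}\e\Big(\frac{ac}{m}\Big)=\mu(m)\qquad\text{when }(a,m)=1,$$
more generally $c_m(a)=\sum_{\substack{c \bmod m\\(c,m)=1}}\e(ac/m)$ being the Ramanujan sum; here because $(b,r)=1$ the relevant inner parameter is coprime to $m$, so the Ramanujan sum collapses to $\mu(m)$. Then one separates the "main" diagonal-type contribution — the terms where one replaces the twisted progression sum by its average $\frac1{\phi(m)}\sum_{(n,m)=1}f(nq/m)$ — from the genuine error, and the error is by construction precisely $R_f(x;b/r)$ as displayed in the second equality of \eqref{defRf}. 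After this substitution the main term is
$$\sum_{m\mid r}\frac{\mu(m)}{\phi(m)}\sum_{\substack{n\le xm/r\\(n,m)=1}} f\Big(\frac{nr}{m}\Big),$$
and one reindexes $m\mapsto r/m$ to match the statement: writing $n\equiv$ the divisibility condition $m\mid n$ recovers $\sum_{m\mid r}\frac{\mu(r/m)m}{\phi(r)}\sum_{\substack{n\le x\\ m\mid n}}f(n)$, after checking $\phi(r/m)\mid \phi(r)$-type rearrangements through the multiplicativity relation $\phi(r)=\phi(m)\phi(r/m)(m,r/m)/\phi((m,r/m))$ — but in fact the cleanest bookkeeping is to note $\mu(r/m)\ne 0$ forces $(m,r/m)=1$ so $\phi(r)=\phi(m)\phi(r/m)$, turning $\mu(r/m)/\phi(r/m)$ into $\mu(r/m)m/\phi(r)$ exactly as in the claimed formula.

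The only real subtlety — and the step I expect to cost the most care — is the careful interchange of summations and the precise matching of ranges: the sum over $n\le x$ with $(n,r)=r/m$ becomes, after the substitution $n=(r/m)n'$, a sum over $n'\le xm/r$ with $(n',m)=1$, and one must be sure the argument $f((r/m)n')=f(nq/q\cdot\ldots)$ lines up with the form in \eqref{defRf} where the role of $q$ is played by $r$. Since $f$ is a completely general arithmetical function there is no cancellation to exploit and no analytic input — the lemma is a pure identity — so the proof is entirely a matter of reorganizing finite sums and invoking $\sum_{(c,m)=1}\e(ac/m)=\mu(m)$ for $(a,m)=1$. I would therefore present it as: (i) partition by $(n,r)$; (ii) substitute to coprimality and recognize the Ramanujan sum; (iii) add and subtract the $\phi(m)^{-1}$-average to split off $R_f$; (iv) reindex $m\mapsto r/m$ and simplify $\phi$ using $\mu(r/m)\ne0\Rightarrow (m,r/m)=1$.
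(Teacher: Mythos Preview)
Your steps (i)--(iii) match the paper's proof exactly: partition $n$ by $(n,r)$, rewrite the exponential as $\e(ab/m)$ with $(a,m)=1$, add and subtract the $\phi(m)^{-1}$-average to split off $R_f$, and use $c_m(b)=\mu(m)$ (valid since $(b,r)=1$) to obtain
\[
M_f\Big(x;\frac br\Big)=\sum_{m\mid r}\frac{\mu(m)}{\phi(m)}\sum_{\substack{n\le xm/r\\(n,m)=1}} f\Big(\frac{nr}{m}\Big).
\]

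Step (iv), however, contains a real error. The claim ``$\mu(r/m)\ne 0$ forces $(m,r/m)=1$'' is false: take $r=4$, $m=2$, where $r/m=2$ is squarefree but $(m,r/m)=2$. More importantly, even when $r$ is squarefree (so that $(m,r/m)=1$ does hold), a bare reindexing $m\mapsto r/m$ gives
\[
\sum_{m\mid r}\frac{\mu(r/m)}{\phi(r/m)}\sum_{\substack{n\le x\\ m\mid n\\ (n/m,\,r/m)=1}} f(n),
\]
which still carries the coprimality constraint $(n/m,r/m)=1$ and has coefficient $\mu(r/m)/\phi(r/m)$, not $\mu(r/m)m/\phi(r)$. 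Removing that constraint is precisely what produces the factor $m$ in place of $\phi(m)$: one must M\"obius-invert over the condition $(n/m,r/m)=1$, interchange sums, and verify the identity
\[
\sum_{m\mid m'}\frac{\mu(r/m)}{\phi(r/m)}\,\mu(m'/m)=\frac{\mu(r/m')\,m'}{\phi(r)}\qquad(m'\mid r).
\]
This is the ``M\"obius inversion'' the paper invokes in its final line, and it is not a cosmetic reindexing. Your plan is otherwise sound; replace the faulty shortcut in (iv) with this inversion and the proof goes through.
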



%

\begin{proof}
We have
\begin{align*}E_f\Big(x;\frac{b}{r}\Big)
&=\sum_{c=1}^r\sum_{\substack{n\leq x\\ n\equiv c\bmod r }}
f(n)\e\Big(\frac{bn}{r}\Big)\cr& =\sum_{m\mid r}\sum_{\substack{a=1\\(a,m)=1}}^{m}\e\Big(\frac{ab}{ m}\Big)
\sum_{\substack{n\leq x\\ r/m\mid n\\ nm/r\equiv a \bmod m }}
f(n)
\cr& =\sum_{m\mid r}\sum_{\substack{a=1\\(a,m)=1}}^{m}\e\Big(\frac{ab}{ m}\Big)
\sum_{\substack{n\leq xm/r\\ n\equiv a\bmod m }}
f\Big(\frac{nr}{ m}\Big)
.\end{align*}
Therefore we can write
$$E_f\Big(x;\frac{b}{ r}\Big)=M_f\Big(x;\frac{b}{r}\Big)+R_f\Big(x;\frac{b}{r}\Big)$$
where $R_f(x; {b}/{r} ) $ is defined in \eqref{defRf} and
\begin{align*}M_f\Big(x;\frac{b}{r}\Big)&:= \sum_{m\mid r}\frac{\mu(m)}{
\phi(m)}\sum_{\substack{n\leq xm/r\\ (n,m)=1}} f\Big(\frac{nr}{m}\Big).   \end{align*} 
M\"obius inversion gives the desired result.
\end{proof}

By Lemma \ref{lemGene}, if $R_f(x,b/r)$ is negligible and $\beta$ is small, then $E_f(x;b/r+\beta)$ is well approximated by the quantity
$$
\sum_{k\mid r}\frac{\mu(r/k) k }{
\phi(r )}\sum_{\substack{n\leq x\\ k\mid n}}
f(n) \e (n\beta).
$$
Indeed, summation by parts gives the identity
\begin{align*}
E_f(x;b/r+\beta)&=\e(\beta x)  
E_f(x;b/r )-2\pi i\beta \int_1^x\e(\beta t) E_f(t;b/r )\d t 
\cr&=\sum_{k\mid r}\frac{\mu(r/k) k }{
\phi(r )}\sum_{\substack{n\leq x\\ k\mid n}}
f(n) \e (n\beta) +O\Big(\big( 1+|\beta| x\big)R_f^*\Big(x;\frac{b}{ r}\Big) \Big)
\end{align*}
where $$R_f^*\Big(x;\frac{b}{ r}\Big):=\max_{t\leq x}
\Big| R_f\Big(t;\frac{b}{ r}\Big)\Big|.$$
By \eqref{eqmajRfsousH1}, Hypothesis \ref{hypequir} gives the following bound on this error term, under the condition $r\leq Q(x)$:  $$R^*_f\Big(x;\frac{b}{ r}\Big)\ll r \frac {U_0(x)}{L(x)}.$$
This gives an idea of the admissible size of $\beta$ relative to $R_f$. For example, we can take $|\beta|\leq 1/rR$ with $r\leq \min\{ Q(x),\sqrt{L(x)}\}$ and $R=x/\sqrt{L(x)}$, and obtain an error term $\ll U_0(x)/\sqrt{L(x)}.$

We now apply the circle method to $f(m)$, and extract the major arcs contribution. Writing
$$f(m)=\int_0^1 \e(-m\theta) \Big(\sum_{n\leq x} f(n)\e(n\theta)\Big)\d \theta,$$
we can hope (at least heuristically) that a large contribution comes from the arcs $]b/r-1/S, b/r-1/S[$ where $r\leq R$ and $(b,r)=1$.

The sum of all major arcs contributions to $f(m)$ is therefore given by
\begin{equation}\label{f(m)1}\begin{split}
& \sum_{\substack{r\leq R\\(b,r)=1}}\e(-mb/r)\int_{-1/S}^{1/S} \Big(\sum_{n\leq x} f(n)\e(nb/r+(n-m)\beta)\Big)\d \beta 
\cr
= & \sum_{\substack{r\leq R\\(b,r)=1}}\sum_{\substack{ k\mid r}}\frac{\mu(r/k) k }{
\phi(r )}\sum_{(b,r)=1}\e(-mb/r)\int_{-1/S}^{1/S} \Big(\sum_{\substack{n\leq x\\ k\mid n}}
f(n) \e ((n-m)\beta) \Big)\d \beta 
\\ &\hspace{2cm}+O\Big( \Big(\frac 1S+\frac x{S^2}\Big) \frac{R^3 U_0(x)}{L(x)} \Big)  .
\end{split}\end{equation}

Combining Hypothesis \ref{hyphd} with summation by parts gives that
\begin{multline*}\sum_{\substack{n\leq x\\ k\mid n}}
f(n)\e ((n-m)\beta)= \sum_{0\leq j< J}  \frac{h_j(k)}{k} \int_0^x
u_j(t)\e ((t-m)\beta) dt
+O\Big(  (1+|\beta| x)  \max_{t\leq x} \frac{U_0(t)}{L(t)(\log Q(t))^2} \Big).
\end{multline*}

Applying the Euler-Maclaurin Formula to the main term in this expression and  using the notation 
$u_j^*(x):=|u_j(1)|+|u_j(x)|$
then implies that
\begin{multline*}\sum_{\substack{n\leq x\\ k\mid n}}
f(n)\e ((n-m)\beta)= \sum_{0\leq j< J}  \frac{h_j(k)}{k} \sum_{ n\leq x } 
u_j(n)\e ((n-m)\beta)
\\+O\Big( \sum_{0 \leq j < J}\frac{|h_j(k)|}{k} \big( u_j^*(x)+|\beta| U_0(x) \big)+(1+|\beta| x)\max_{t\leq x} \frac{U_0(t)}{L(t)(\log Q(t))^2} \Big),
\end{multline*}
where this time the main term on the right hand side should be small when $\|\beta\|$ is not.


Now, if $u_j$ is monotonic and differentiable, we have
\begin{align*} \sum_{ n\leq x }
u_j(n)\e (n\beta)&=u_j(x)E_1(x;\beta)-\int_1^xu_j'(t)E_1(t;\beta)\d t
\cr&\ll \frac{1}{||\beta||}\Big( |u_j(x)|+\int_1^x|u_j'(t)|\d t\Big)
 \ll \frac{u_j^*(x)}{||\beta||} .
\end{align*}
Hence, from \eqref{f(m)1} and introducing the Ramanujan sums
\begin{equation}\label{defcqm}
c_r(m):=\sum_{\substack{1\leq a\leq r\\(a,r)=1}}\e(ma/r)=\mu\Big(\frac{r}{(r,m)}\Big)\frac{\phi(r)}{\phi((r/(r,m))},
\end{equation} we obtain
\begin{equation}\label{f(m)2}\begin{split}
f(m) &
\approx\sum_{0\leq j< J}   \sum_{\substack{r\leq R\\k\mid r}}\frac{\mu(r/k) h_j(k) }{
\phi(r )}\sum_{(b,r)=1}\e(-mb/r)\int_{-1/S}^{1/S} \Big(\sum_{n\leq x}
u_j(n) \e ((n-m)\beta) \Big)\d \beta 
\cr &
\approx\sum_{0\leq j< J} \sum_{\substack{r\leq R }}\frac{(h_j*\mu)(r) }{
\phi(r )}c_r(m)\int_{-1/2}^{1/2} \Big(\sum_{n\leq x}
u_j(n) \e ((n-m)\beta) \Big)\d \beta =F_R(m),
\end{split}\end{equation}
with
\begin{equation}\label{deffQm}\begin{split}
F_R(m)&:=\sum_{0\leq j< J}u_j(m)\sum_{ r\leq R }\frac{(h_j*\mu)(r)  }{
\phi(r/(r,m))}\mu\Big(\frac{r}{(r,m)}\Big)\cr
&=  \sum_{0\leq j< J}u_j(m)\sum_{\substack{r\leq R}}\frac{(h_j*\mu)(r) }{
\phi(r )}\sum_{(b,r)=1}\e(mb/r).\end{split}
\end{equation}
 
Note that if we take $J=1$, $h(n)=\delta(n)$ (the neutral element in convolution) and $u_0 =1$, then we recover Vaughan's original approximation.

\section{First moment} 

The goal of this section is to prove Theorem \ref{thmoment1}, that is to bound the first moment
\begin{equation}\label{defM1}M_{1}(x,N, R;a):=\sum_{x/N<q\leq x} \big(\A^*(x;q,a)-\rho^*_R(x;q,a)\big).
\end{equation}
Similar arguments will yield an upper bound for
\begin{equation}\label{deftildeM1}\widetilde M_{1}(x,N, R;a):=\sum_{1\leq q\leq N} \big(\A^*(x;q,a)-\rho^*_R(x;q,a)\big).
\end{equation}
We denote
\begin{equation}\label{defsigmaR} \sigma_j(R):=\sum_{\substack{ r\leq R}}\frac{|(h_j*\mu)(r) |r }{
\phi(r )} .
\end{equation}

\begin{proof}[Proof of Theorem \ref{thmoment1}] We concentrate on bounding $M_{1}(x,N, R;a)$, since the proof of the corresponding bound for  $\widetilde M_{1}(x,N, R;a) $ is similar. We begin by estimating 
$$M_{11}(x,N;a):=\sum_{x/N<q\leq x} \A^*(x;q,a).$$
Under the condition $-x/N < a \leq x$, we can use the symmetry of the divisors $n-a=qs$ to obtain the identity
\begin{align*}
M_{11}(x,N;a)&=\sum_{1\leq s<N-aN/x}\sum_{\substack{a+sx/N<n\leq x\\n\equiv a\bmod s}}f(n) -\sum_{x+a<n\leq x}  f(n)
\cr 
&=\sum_{1\leq s<N-aN/x}
\big(\A^*(x;s,a)- \A^*(a+sx/N;s,a)\big)-\sum_{x+a<n\leq x}  f(n)  .
\end{align*}
Note that the last sum is empty when $a\geq 0$, and comes from the contribution of the case $s=1$, $q=n-a>x$. Applying Corollary \ref{corBV}, we obtain uniformly for $N+1\leq Q(x)$ that
\begin{equation}\label{estM11}\begin{split}M_{11}(x,N;a)&=\sum_{0\leq j< J} \sum_{1\leq s\leq N-aN/x }g_j(s,a)\int^x_{a+sx/N} u_j(t)\d t\cr&\quad -\sum_{0\leq j<J}\int_{x+a<t\leq x}u_j(t)\d t+O\Big(\tau(a)\frac{U_0(x)}{L(x)}\Big).\end{split}\end{equation}

We now give an estimate for 
\begin{align*}M_{12}(x,N, R;a)&=\sum_{x/N<q\leq x} \rho^*_R(x;q,a).\end{align*}
We have
\begin{equation}\label{M12}\begin{split}
M_{12}(x,N, R;a)&=\sum_{1\leq s<N-aN/x}\,\,\sum_{\substack{a+sx/N<n\leq x\\n\equiv a\bmod s}}F_R(n)-\sum_{x+a<n\leq x}  F_R(n)
\cr
&=\sum_{1\leq s<N-aN/x}
\big(\rho^*_R(x;s,a)- \rho^*_R(a+sx/N;s,a)\big)-\sum_{x+a<n\leq x}  F_R(n),
\end{split}\end{equation}
with
\begin{align*}\rho^*_R(x;s,a) &=  \sum_{\substack{a< n\leq x\\n\equiv a\bmod s}}\sum_{0\leq j< J} 
u_j(n)\sum_{ r\leq R }\frac{(h_j*\mu)(r)  }{
\phi(r/(r,n))}\mu\Big(\frac{r}{(r,n)}\Big)
\\&= \sum_{0\leq j< J}  \sum_{\substack{ a<n\leq x\\n\equiv a\bmod s}}
u_j(n)\sum_{ r\leq R }\frac{(h_j*\mu)(r)  }{
\phi(r )}\sum_{\substack{1\leq c\leq r\\(c,r)=1}}\e(nc/r)  .\end{align*}
Moreover, for $a<0$
$$\sum_{x+a<n\leq x}  F_R(n)=\rho^*_R(x;1,0)-\rho^*_R(x+a;1,0).$$
We now show that in $M_{12}(x,N, R;a)$, the contribution of the $r$ with $r\nmid s$ is small. Indeed, for fixed $j$ this contribution is given by
\begin{align*}&\sum_{\substack{ r\leq R \\r\nmid s}}\frac{(h_j*\mu)(r)  }{
\phi(r )}\sum_{\substack{1\leq c\leq r\\(c,r)=1}}\sum_{\substack{ a<n\leq x\\n\equiv a\bmod s}}u_j(n)\e(nc/r) \cr=&
\sum_{\substack{ r\leq R \\r\nmid s}}\frac{(h_j*\mu)(r)  }{
\phi(r )}\sum_{\substack{1\leq c\leq r\\(c,r)=1}}\e(ac/r)\sum_{\substack{ 1\leq k\leq (x-a)/s }}u_j(a+ks)\e(  cks /r). \end{align*}
Since each $u_j$ is monotonic and differentiable, we can show that the exponential sum is small (since $s/r$ is not an integer). Indeed, summation by parts gives
$$\sum_{\substack{ 1\leq k\leq (x-a)/s }}u_j(a+ks)\e(  cks /r)
\ll \frac{u_j^*(x) }{||cs/r||}+\frac{s}{||cs/r||}\int_{0}^{(x-a)/s}|u_j'(a+st)|\d t
\ll \frac{u_j^*(x)}{||cs/r||},$$
where 
$$u_j^*(x):=|u_j(x)|+1,$$ and $|| t ||$ denotes the distance to the nearest integer. The contribution of the terms with $r\nmid s$ is therefore
\begin{align*}&\ll \sum_{0\leq j< J}u_j^*(x)\sum_{1\leq s<N-aN/x}\sum_{\substack{ r\leq R \\r\nmid s}}\frac{|(h_j*\mu)(r) | }{\phi(r )}r \log r \cr&\ll  N(\log R)u_1^*(x,R).
\end{align*} 
 
 Hence,
\begin{align*}M_{12}(x,N, R;a) &=\sum_{0\leq j< J}\sum_{1\leq s<N-aN/x}
\sum_{\substack{ r\leq R \\r\mid s}}
\frac{(h_j*\mu)(r)  }{
\phi(r/(r,a))}\mu\Big(\frac{r}{(r,a)}\Big)
  \sum_{\substack{ x/N<k\leq (x-a)/s }}u_j(a+ks)\cr&\quad  -
\sum_{0\leq j< J}\sum_{x+a<n\leq x }u_j(n)
 +O\big(  N(\log R)u_1^*(x,R)\big)
\\
&=\sum_{0\leq j< J}\sum_{1\leq s<N-aN/x}\frac{1}{s}
\sum_{\substack{ r\leq R \\r\mid s}}
\frac{(h_j*\mu)(r)  }{
\phi(r/(r,a))}\mu\Big(\frac{r}{(r,a)}\Big)
  \int_{a+sx/N}^x u_j(t)\d t\cr&\quad -\sum_{0\leq j< J}\int_{x+a<t\leq x }u_j(t)\d t+O\big(  N(\log R)u_1^*(x,R)\big)\end{align*}
Indeed, summation by parts gives
\begin{equation}\label{som=int}\sum_{\substack{0< k\leq (x-a)/s }}u_j(a+ks)=\frac{1}{s}\int_a^x u_j(t)\d t+O(u_j^*(x)) 
\end{equation}
and for $|a|\leq x$,
$$\sum_{1\leq s<N-aN/x} 
\sum_{\substack{ r\leq R \\r\mid s}}
\frac{|(h_j*\mu)(r)|  }{
\phi(r/(r,a))}\mu^2\Big(\frac{r}{(r,a)}\Big)\ll 
\sum_{\substack{ r\leq R  }} \left[ \frac {2N} r \right]
\frac{|(h_j*\mu)(r)|  }{
\phi(r/(r,a))}  \ll 
N\sigma_j(R).$$
Substituting this in \eqref{M12}, we obtain an analogue of  \cite[Lemma 2.4(i)]{F15}
\begin{equation}\label{estM12}M_{12}(x,N, R;a)=m_{12}(R,N;a)-\sum_{0\leq j< J}\int_{x+a<t\leq x }u_j(t)\d t+O\Big( N(\log R)u_1^*(x,R)\Big)\end{equation}
with
\begin{equation}m_{12}(R,N;a):=\sum_{0\leq j< J}
\sum_{1\leq s\leq N -aN/x }\frac{1}{s}\Big(
\int_{a+sx/N}^x u_j(t)\d t \Big)
\sum_{\substack{ r\leq R \\r\mid s}}
\frac{(h_j*\mu)(r)  }{
\phi(r/(r,a))}\mu\Big(\frac{r}{(r,a)}\Big).
\end{equation}

We now approximate $m_{12}(R,N;a)$ with $m_{12}(\infty,N;a)$.
Let us show that
$$
\sum_{\substack{  r\mid s}}
\frac{(h_j*\mu)(r)  }{
\phi(r/(r,a))}\mu\Big(\frac{r}{(r,a)}\Big)=sg_j(s,a)
= \frac{s/(s,a)}{\phi(s/(s,a))} \sum_{d\mid s/(s,a)} \mu(d) \frac{h_j((s,a)d)}{d}  $$ for any arithmetical function  $h_j$.\footnote{Note that if $h_j$ is multiplicative, then the  function $s\mapsto g_j(s,a)$ defined in \eqref{eqdefgaq} is also multiplicative, and can easily compute its value at prime powers
 }
Opening the convolution, we have
\begin{align*}
\sum_{\substack{  r\mid s}}
\frac{(h_j*\mu)(r)  }{
\phi(r/(r,a))}\mu\Big(\frac{r}{(r,a)}\Big)&=
\sum_{\substack{  r\mid s}}
\frac{\mu(r/(r,a))}{
\phi(r/(r,a))}\sum_{d\mid r}h_j(d)\mu(r/d) 
\\
&= \sum_{d\mid s}h_j(d)
\sum_{\substack{    r'\mid s/d}}
\frac{\mu(r'd/(r'd,a))\mu(r')}{
\phi(r'd/(r'd,a))} .
\end{align*}
The innermost sum is empty as soon as $(s,a)\nmid d$.
We deduce that
\begin{align*}
\sum_{\substack{  r\mid s}}
\frac{(h_j*\mu)(r)  }{
\phi(r/(r,a))}\mu\Big(\frac{r}{(r,a)}\Big) &= \sum_{d'\mid s/(s,a)}h_j(d'(s,a))
\sum_{\substack{    r'\mid s/d'(s,a)}}
\frac{\mu({r'd'} )\mu(r')}{
\phi(r'd')} \\&
= \frac{s/(s,a)}{\phi(s/(s,a))}\sum_{d'\mid s/(s,a)}   \mu(d') \frac{h_j((s,a)d')}{d'},  
\end{align*}
which implies the desired identity.

We have
$$m_{12}(\infty,N;a)=\sum_{0\leq j< J} 
\sum_{1\leq s\leq N -aN/x }g_j(s,a)
\int_{a+sx/N}^x u_j(t)\d t .$$
Hence, for $R\geq N+1\geq N -aN/x$ we have
$$m_{12}(R,N;a)= m_{12}(\infty,N;a).$$ 
Therefore, substituting \eqref{estM11} and 
\eqref{estM12} in \eqref{defM1}, we have proved the desired bound on $M_{1}(x,N, R;a)$.

The bound on $\widetilde M_{1}(x,N, R;a)$ follows along similar lines. 
\end{proof}

\goodbreak

\section{Variance}

In this section we study the variance
\begin{equation}\label{defVxNRq} V(x,N,R;q):=\sum_{x/{2N} < q \leq x/N }\sum_{\substack{ 1\leq a \leq q}} \big(\A (x;q,a)-\rho _R(x;q,a)\big)^2=m(2N)-m(N), 
\end{equation}
where
$$ m(N)=\sum_{x/{N} < q \leq x }\sum_{\substack{  1\leq a \leq q }} \big(\A (x;q,a)-\rho _R(x;q,a)\big)^2.  $$

\begin{proof}[Proof of Theorem \ref{thmoment2}]
We have
\begin{align*}
m(N)&=\sum_{x/{N} < q \leq x}\sum_{\substack{  m,n \leq x \\ m\equiv n \bmod q }} \Delta_R(n)\Delta_R(m)\\ &=(\lceil x\rceil -\lfloor x/N\rfloor)  \sum_{ n\leq x} \Delta_R(n)^2  +2\sum_{x/{N} < q \leq x}\sum_{\substack{  m,n \leq x \\ m=n+qs \\ s\geq 1}} \Delta_R(n)\Delta_R(m). 
\end{align*} 

The second of these sums equals
$$\sum_{\substack{ n\leq x }}\Delta_R(n)M_{1}(x,N, R;n),$$
and hence we can apply Theorem \ref{thmoment1} and obtain the bound
$$ \ll  \sum_{\substack{n\leq x }}|\Delta_R(n)| \tau(n)\frac{U_0(x)}{L(x)} \ll \frac{U_0(x)x^{\frac 12}(\log x)^{\frac 32}}{L(x)} \|\Delta_R\|_2.
 $$
Hence, for $N\leq \min \{ Q(x), R, U_0(x)/(L(x)u_1^*(x,R) (\log R) ) \}$, we have the estimate \eqref{estVDelta}.

It is now clear that the variance is dominated by the diagonal terms. Let us evaluate these terms. We have
\begin{equation}\label{M2DELTA}
\|\Delta_R\|_2^2 = \|f\|_2^2 +\|F_R\|_2^2 -2 \sum_{ n\leq x} f(n)F_R(n). 
\end{equation}

For the third term on the right hand side of \eqref{M2DELTA}, we have by \eqref{deffQm} that
\begin{align*}
\sum_{ n\leq x} f(n) F_R(n)&=\sum_{0\leq j< J}\sum_{ n\leq x}
u_j(n)f(n)\sum_{ r\leq R }\frac{(h_j*\mu)(r)  }{
\phi(r/(r,n))}\mu\Big(\frac{r}{(r,n)}\Big)\cr
&=\sum_{0\leq j< J}\sum_{ r\leq R }(h_j*\mu)(r)\sum_{k\mid r}\frac{ \mu ( {r}/k) }{
\phi(r/k)}\sum_{\substack{ n\leq x\\(n,r)=k}}
u_j(n)f(n)
\cr
&=\sum_{0\leq j< J}\sum_{ r\leq R }(h_j*\mu)(r)\sum_{k\ell\mid r}\frac{ \mu ( {r}/k)\mu(\ell) }{
\phi(r/k)}\sum_{\substack{ n\leq x\\ k\ell\mid n}}
u_j(n)f(n)
\cr
&=\sum_{0\leq j< J}\sum_{ r\leq R }(h_j*\mu)(r)\sum_{m\mid r}\kappa(m,r)\sum_{\substack{ n\leq x\\ m\mid n}}
u_j(n)f(n)
\end{align*}
where for $m\mid r$ we put
$$\kappa(m,r):=\sum_{k\ell=m}\frac{ \mu ( {r}/k)\mu(\ell) }{
\phi(r/k)}.$$
A calculation gives
$$\kappa(m,r)=\frac{ \mu ( {r}/m)  }{
\phi(r/m)}\prod_{\substack{p\mid m\\ v_p(m)=v_p(r) }}(1-1/p)^{-1}.$$
Applying integration by parts, Hypothesis \eqref{hyphd} gives that for all $j$, 
\begin{align*}
\sum_{\substack{ n\leq x\\ m\mid n}}
u_j(n)f(n)=
\sum_{0\leq j'<J}\frac{h_{j'}(m)}{m}\int_{0}^{x}
u_j(t)u_{j'}(t)\d t+O\left( \frac{U_0(x)u_j^*(x)}{L(x)(\log Q(x))^2}\right).
\end{align*}
However, \begin{equation}\label{majsumkappa}
\sum_{m\mid r}|\kappa(m,r)|\ll   \Big(\frac{r}{\phi(r)}\Big)^2,
\end{equation}
and hence
\begin{equation}\label{sumfF}\begin{split}\sum_{ n\leq x} f(n) F_R(n)&=\sum_{0\leq j ,j'< J}
  \sum_{ r\leq R }(h_j*\mu)(r)\sum_{m\mid r}\kappa(m,r)\frac{h_{j'}(m)}{m}\int_{0}^{x}
u_j(t)u_{j'}(t)\d t\\ &\quad +O \left( \frac{U_0(x)u_2^*(x,R)}{L(x)(\log Q(x))^2}\right).\end{split}
\end{equation} 

A calculation shows that
$$\sum_{m\mid r}\kappa(m,r)\frac{h_{j'}(m)}{m}=\frac{(h_{j'}*\mu)(r)}{\phi(r)}  .$$
For the second term on the right hand side of \eqref{M2DELTA}, by \eqref{deffQm} we have
$$\|F_R\|_2^2 = \sum_{0\leq j ,j'< J}\sum_{ n\leq x}
u_j(n)u_{j'}(n)\sum_{ r\leq R }\sum_{ r'\leq R }\frac{(h_j*\mu)(r)(h_{j'}*\mu)(r')  }{
\phi(r/(r,n))\phi(r'/(r',n))}\mu\Big(\frac{r}{(r,n)}\Big)\mu\Big(\frac{r'}{(r',n)}\Big) .$$
Analogous steps give the identity \begin{align*}
\|F_R&\|_2^2  =\sum_{0\leq j ,j'< J}\sum_{\substack{ r\leq R \\ r'\leq R }} (h_j*\mu)(r)(h_{j'}*\mu)(r')\sum_{\substack{k\mid r\\k'\mid r'}}\frac{ \mu ( {r}/k) }{
\phi(r/k)}\frac{ \mu ( {r'}/k') }{
\phi(r'/k')}\sum_{\substack{ n\leq x\\(n,r)=k\\(n,r')=k'}}
u_j(n)u_{j'}(n)\\
&=\sum_{0\leq j ,j'< J}\sum_{\substack{ r\leq R \\ r'\leq R }} (h_j*\mu)(r)(h_{j'}*\mu)(r')\sum_{\substack{k\ell\mid r\\k'\ell'\mid r'}}\frac{ \mu ( {r}/k)\mu ( {r'}/k') \mu(\ell)\mu(\ell')}{
\phi(r/k) 
\phi(r'/k')}  \sum_{\substack{ n\leq x\\ [k\ell,k'\ell']\mid n}}
u_j(n)u_{j'}(n)
\\
&=\sum_{0\leq j ,j'< J}\sum_{\substack{ r\leq R \\ r'\leq R }} (h_j*\mu)(r)(h_{j'}*\mu)(r')\sum_{\substack{m\mid r\\ m'\mid r'}}\kappa(m ,r ) \kappa(m',r') \sum_{\substack{ n\leq x\\ [m,m']\mid n}} 
u_j(n)u_{j'}(n)
\end{align*}
The estimate \eqref{som=int} applied to $u_j u_{j'}$ gives that
$$\sum_{\substack{ n\leq x\\ [m,m']\mid n}} 
u_j(n)u_{j'}(n)=\frac1{[m,m']}\int_{0}^{x}
u_j(t)u_{j'}(t) \d t+O\big(u_j^*(x)u_{j'}^*(x)\big)$$
 
We obtain
\begin{align*}\|F_R\|_2^2=&\sum_{0\leq j ,j'< J}
  \sum_{\substack{ r\leq R \\ r'\leq R }} (h_j*\mu)(r)(h_{j'}*\mu)(r')\sum_{\substack{m\mid r\\ m'\mid r'}}\frac{\kappa(m ,r ) \kappa(m',r') }{[m,m']}\int_{0}^{x}
u_j(t)u_{j'}(t)\d t\cr&+O\big(\sigma_{R,2}^2  \big),\end{align*}
where, by \eqref{majsumkappa},
\begin{align*}\sigma_{R,2}&:=\sum_{0\leq j  < J}\sum_{\substack{ r\leq R  }} |(h_j*\mu)(r)|\sum_{\substack{m\mid r }}|\kappa(m ,r )|u_j^*(x) \cr&\leq 
\sum_{0\leq j  < J}\sum_{\substack{ r\leq R  }} |(h_j*\mu)(r)| \Big(\frac{r}{\phi(r)}\Big)^2u_j^*(x) 
=  u_2^*(x,R) .
\end{align*} 
A calculation shows that
$$\sum_{\substack{m\mid r\\ m'\mid r'}}\frac{\kappa(m ,r ) \kappa(m',r') }{[m,m']}=\frac{1_{r=r'}}{\phi(r)}.$$
Hence, we have that
\begin{equation}\label{sumFF}\|F_R\|_2^2=
  \sum_{\substack{ r\leq R  }}  \frac{1}{\phi(r)} \int_{0}^{x}
\Big(\sum_{0\leq j  < J}(h_j*\mu)(r) u_j(t)\Big)^2\d t+O\big( u_2^*(x,R)^2  \big).
\end{equation} 
By substituting \eqref{sumfF} and \eqref{sumFF} in \eqref{M2DELTA}, we obtain \eqref{estM2DELTA}.
\end{proof}

\section{Sum of divisors}

%
%

We now consider the $k$-th divisor function $f(n)=\tau_k(n)$, and show that Hypotheses \ref{hyphd} and \ref{hypequir} hold. To describe the implied arithmetical functions $h_{j,k}(d)$ we will write
$$F_{d,k}(s):=\sum_{n=1}^\infty \frac{\tau_k(dn)}{\tau_k(d)n^s}=\zeta(s)^k G_{d,k}(s),$$
where
$$G_{d,k}(s)=\prod_{p\mid d } \Big( 1-\frac{1}{p^s}\Big)^k \Big(1+\sum_{\mu\geq 1}
\frac{\tau_k(p^{\mu+v_p(d)})}{\tau_k(p^{v_p(d)})p^{\mu s}}\Big),$$ and $v_p$ denotes the $p$-adic valuation. 
 The arithmetic functions $h_{j,k}$ in Hypothesis \ref{hyphd} will be of the form 
 
 \begin{equation}
  \qquad\qquad h_{j,k}(d)=\tau_k(d)\Big( \frac{b_{j,k}(d)}{ j!}+\frac{b_{j-1,k}(d)}{ (j-1)!}\Big)\qquad\qquad ( 0\leq j\leq k),
  \label{eqdefhjk}
\end{equation}  
 where
\begin{equation}
 b_{j,k}(d):= \Big(\frac{\{\zeta(s)(s-1)\}^k G_{d,k}(s)}{sd^{s-1}}\Big)^{(j)}(1).
 \label{eqdefbjk}
\end{equation} 
(For ease of notation we set $b_{-1}(d)=0$.)

\begin{proposition}\label{proptauk} Let $f(n)=\tau_k(n)$, with $k\geq 2$.
Hypothesis \ref{hyphd} holds with the choice
$J=k,$  $u_{j,k}(t)=(\log t)^{k-j-1}/(k-1-j)!$, $h_{j,k}$ as in \eqref{eqdefhjk} and any function $L$ such that $L(x)\leq x^{2/(k+2)-\varepsilon} $. Moreover, we have the bound \begin{equation}|(h_{j,k}*\mu)(d)|\ll_k (\tau_{k-1}*|\tilde h_k|)(d) (\log d)^{j},
\label{eqmajhjkhtilde}
\end{equation} 
where $\tilde h_k$ is an arithmetical function such that
$$\sum_{d=1}^\infty \frac{|\tilde h_k(d)|}{d}\ll_k 1.$$
Finally, Hypothesis~\ref{hypequir} with the choice $Q(x)= x^{1/2-c/k}$ and any function $L$ such that $L(x)\leq x^{ c/k}$ for some suitably chosen absolute constant $0<c<1/2$.
\label{veriftau}
\end{proposition}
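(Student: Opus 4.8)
\textbf{Proof plan for Proposition \ref{proptauk}.}

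The plan is to verify the three assertions in turn, exploiting the multiplicative structure of $\tau_k$ and the Dirichlet series identity $F_{d,k}(s)=\zeta(s)^kG_{d,k}(s)$. For Hypothesis \ref{hyphd}, I would start from
$$\A_d(x)=\sum_{\substack{n\le x\\ d\mid n}}\tau_k(n)=\tau_k(d)\sum_{m\le x/d}\frac{\tau_k(dm)}{\tau_k(d)}$$
and apply Perron's formula with the Dirichlet series $F_{d,k}(s)$. Shifting the contour past the pole of order $k$ at $s=1$, the main term is a residue which, by the Leibniz rule, is a linear combination of $(\log x)^{k-1-j}/(k-1-j)!$ for $0\le j<k$ with coefficients built from the Taylor expansion of $\{\zeta(s)(s-1)\}^kG_{d,k}(s)/(sd^{s-1})$ at $s=1$; this is precisely how the functions $b_{j,k}(d)$ in \eqref{eqdefbjk} and hence $h_{j,k}(d)$ in \eqref{eqdefhjk} arise (the shift by one index in \eqref{eqdefhjk} coming from the $x^s/s$ versus the $(s-1)$-derivative bookkeeping, or equivalently from integrating $u_{j,k}$). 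To control the error term one needs a zero-free region for $\zeta(s)^k$ together with a bound for $G_{d,k}(s)$ on a vertical line slightly to the left of $1$; since $G_{d,k}(s)$ is an Euler product supported on primes dividing $d$, one gets $|G_{d,k}(s)|\ll_k d^{\varepsilon}$ for $\Re s\ge 1/2+\varepsilon$, say, and the resulting error is $\ll_k x^{1/2+\varepsilon}d^{\varepsilon}\ll U_0(x)/(L(x)(\log Q(x))^2)$ once $L(x)\le x^{2/(k+2)-\varepsilon}$ and $d\le x$ — here the exponent $2/(k+2)$ is dictated by balancing the contour-shift error against the requirement $d\le x$ in the truncated Perron range. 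I would record the identity $U_{j,k}(x)=\int_0^x(\log t)^{k-1-j}/(k-1-j)!\,dt$ and check it matches $(\log x)^{k-j}/(k-j)!$ up to lower-order polynomials in $\log x$, absorbing the discrepancy into the $b_{j-1,k}$ term of \eqref{eqdefhjk}.

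For the bound \eqref{eqmajhjkhtilde} on $(h_{j,k}*\mu)(d)$, the idea is that $h_{j,k}(d)=\tau_k(d)\cdot(\text{something of size }(\log d)^j\text{ controlled by }G_{d,k})$, and convolving with $\mu$ detects the ``squarefree kernel'' behaviour. Concretely, writing $h_{j,k}=\tau_k\cdot g_{j,k}$ where $g_{j,k}(d)$ collects the derivative data, one uses that $g_{j,k}$ is essentially multiplicative with $g_{j,k}(p)$ of bounded size and $g_{j,k}(p^\nu)$ decaying, so that $(h_{j,k}*\mu)(d)$ — which on prime powers is $h_{j,k}(p^\nu)-h_{j,k}(p^{\nu-1})$ — can be written as $(\tau_{k-1}*\tilde h_k)(d)$ times a factor $\ll_k(\log d)^j$, where $\tilde h_k$ is the multiplicative function with $\tilde h_k(p^\nu)$ supported so that $\sum_d|\tilde h_k(d)|/d$ converges; the convergence comes from the fact that $\tilde h_k(p)=O(1/p)$ (one power of $1/p$ is saved by the Möbius convolution), so $\prod_p(1+|\tilde h_k(p)|/p+\cdots)<\infty$. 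The bookkeeping here is the routine-but-delicate part: one must track the $(\log d)^j$ growth through the Leibniz expansion of the $j$-th derivative of a product, and check it factors out uniformly in $d$.

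Finally, for Hypothesis \ref{hypequir} with $\tau_k$, I would invoke a Bombieri--Vinogradov type theorem for $\tau_k$ in arithmetic progressions. For $k=2$ the Bombieri--Vinogradov theorem (or the classical result of Motohashi) gives equidistribution up to $Q(x)=x^{1/2-\varepsilon}$; for general $k$ one uses the elementary Dirichlet hyperbola / divisor-switching method (as in Linnik, Motohashi, or more recently the results cited in the introduction) which yields an admissible level of distribution $Q(x)=x^{1/2-c/k}$ for a small absolute $c>0$, with a power-saving error term $\ll U_0(x)/L(x)$ for any $L(x)\le x^{c/k}$ — since the natural error in these methods is of the shape $x\cdot x^{-c/k}$, the power of $\log x$ implicit in $U_0(x)=x(\log x)^{k-1}/(k-1)!$ is harmless. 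The main obstacle in the whole proposition is the first part: getting the explicit residue computation to match \eqref{eqdefhjk}--\eqref{eqdefbjk} exactly, and in particular correctly identifying which Taylor coefficient of $\{\zeta(s)(s-1)\}^kG_{d,k}(s)/(sd^{s-1})$ corresponds to which $u_{j,k}$ after the Euler--Maclaurin passage from integrals $U_{j,k}$ to sums — the index shift in \eqref{eqdefhjk} is the subtle point and must be verified by a careful comparison of $\int_0^x u_{j,k}$ with $\sum_{n\le x}u_{j,k}(n)$.
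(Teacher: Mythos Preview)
Your overall strategy---Perron for Hypothesis~\ref{hyphd}, Dirichlet-series analysis for the bound on $h_{j,k}*\mu$, and Bombieri--Vinogradov for $\tau_k$ for Hypothesis~\ref{hypequir}---matches the paper's. But two of your three sketches contain gaps that would prevent the argument from going through as written.

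\textbf{Error term in Hypothesis~\ref{hyphd}.} You invoke a zero-free region for $\zeta(s)^k$ and a contour ``slightly to the left of $1$'', yet claim a power-saving error $x^{1/2+\varepsilon}$. A zero-free region of classical type only lets you shift to $\sigma=1-c/\log T$ and saves at best a factor $\exp(-c\sqrt{\log x})$, not a power of $x$. The paper instead shifts all the way to $\Re(s)=\varepsilon$, uses the convexity bound $\zeta(\varepsilon+it)\ll|t|^{1/2}$ so that $|\zeta(s)^k|\ll T^{k/2}$ on that line, and balances $x^{1+\varepsilon}/T$ against $x^{\varepsilon}T^{k/2}$ by taking $T=x^{2/(k+2)}$; this is where the exponent $2/(k+2)$ genuinely comes from, and the resulting error is $\tau(d)\,x^{1-2/(k+2)+\varepsilon}$, not $x^{1/2+\varepsilon}$. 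Separately, the index shift $b_{j,k}/j!+b_{j-1,k}/(j-1)!$ in \eqref{eqdefhjk} does not come from Euler--Maclaurin: it comes from the elementary identity $x(\log x)^m=\int_0^x\bigl[(\log t)^m+m(\log t)^{m-1}\bigr]\,\d t$, i.e.\ from differentiating $tP_d(\log t-\log d)$ to rewrite $xP_d$ as $\sum_j h_{j,k}(d)d^{-1}U_{j,k}(x)$.

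\textbf{The bound \eqref{eqmajhjkhtilde}.} This is where your plan has the real gap. You propose to write $h_{j,k}=\tau_k\cdot g_{j,k}$ with $g_{j,k}$ ``essentially multiplicative'', and to read off $(h_{j,k}*\mu)(d)$ from its values at prime powers. But the derivatives in \eqref{eqdefbjk} bring down factors of $\log d$, so $h_{j,k}$ and $h_{j,k}*\mu$ are \emph{not} multiplicative for $j\ge1$, and knowing $(h_{j,k}*\mu)(p^\nu)$ tells you nothing about $(h_{j,k}*\mu)(d)$ for composite $d$. The paper handles this by introducing an auxiliary complex variable: one writes $\sum_d h_{j,k}(d)d^{-z}$ as the $j$-th $s$-derivative at $s=1$ of $\{\zeta(s)(s-1)\}^k s^{-1}\sum_d\tau_k(d)G_{d,k}(s)d^{-(z+s-1)}$, the inner sum factoring as $\zeta(z+s-1)^kH_k(s,z)$ with $H_k$ absolutely convergent for $\Re(s)+\Re(z)>3/2$. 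Dividing by $\zeta(z)$ then replaces $\zeta(z)^k$ by $\zeta(z)^{k-1}$ (whence $\tau_{k-1}$), and each $s$-derivative contributes at most one factor of $\log$ via $\zeta'/\zeta$ or $\partial_s H_k$; the function $\tilde h_k$ is the coefficient function of $H_k(1,z)$, and one checks $\tilde h_k(p^\mu)\ll_k\tau_k(p^\mu)/p$ directly from $G_{p^\nu,k}(1)=1+O_k(1/p)$. Without this auxiliary-variable device (or an equivalent way to linearise the $\log^j$), your multiplicative-function heuristic cannot be made rigorous.
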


\begin{remark} 
\
\begin{enumerate}
\item In the case $k=3$ Heath-Brown showed \cite{HB86} that for any fixed $\varepsilon>0$ and  any $Q(x)\leq x^{11/21-\varepsilon}$ we can take $L(x) = x^{-\varepsilon} (x^{11/21}/Q)^{7/17}$ in Hypothesis \ref{hypequir}. Restricting the sum to prime moduli and taking $L(x)=(\log x)^B$, Fouvry, Kowalski and Michel  \cite{FKM15} showed that the choice $Q=x^{9/17-\varepsilon}$ is admissible (again for any fixed $\varepsilon>0$).

\item Drappeau \cite[Theorem 7.1]{D15} recently showed that there exists $\eta>0$ such that for $k\geq 4$, $|a|\leq x^\eta$ and $Q\leq x^{1/2+\eta}$, we have the bound
$$\sum_{\substack{q\leq Q\\ (q,a)=1}}\Bigg(\sum_{\substack{n\leq x \\ n\equiv a \bmod q }}\tau_k(n)-\frac{1}{\phi(q)} \sum_{\substack{n\leq x \\ (n, q)=1}}\tau_k(n)\Bigg)\ll x^{1-\eta/k}.$$
(Note that the summand has no absolute values.)
\end{enumerate}

\end{remark}

Before proving Proposition \ref{proptauk}, we need to estimate the summatory function of $\tau_k(dn)/\tau_k(d)$. This will be done using standard tools, and will allow us to deduce Hypothesis \ref{hyphd} for the sequence $f=\tau_k$.

\begin{lemma}
\label{lemperron}
We have
$$\sum_{n\leq x} \frac{\tau_k(dn)}{\tau_k(d) }=xP_d(\log x)+O_{\varepsilon}\Big(\tau(d)x^{1-\frac 2{k+2}+\varepsilon}\Big) ,$$
where $P_d(X)$ is the polynomial in $X=\log x$ with coefficients depending on $d$ defined by
$$P_d(X):={\rm Res}_{s=1}\Big(\zeta(s)^kG_{d,k}(s)\frac{\e^{X(s-1)}}{s} \Big)
=\sum_{j=0}^{k-1} a_{j,k}(d)\frac{X^{k-1-j}}{j!(k-j-1)!}.$$

\end{lemma}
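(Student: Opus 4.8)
The plan is to prove Lemma \ref{lemperron} by a standard contour-integration (Perron) argument applied to the Dirichlet series $F_{d,k}(s)=\zeta(s)^k G_{d,k}(s)$. First I would record the analytic information about $G_{d,k}(s)$: the Euler product shows that $G_{d,k}(s)$ is absolutely convergent and bounded in a half-plane $\Re s \ge \sigma$ for some fixed $\sigma<1$ (independent of $d$), and moreover that $|G_{d,k}(s)|\ll \tau(d)^{O_k(1)}$ there, with polynomial growth in $|\Im s|$ only through the $\zeta(s)^k$ factor. Concretely, for each prime $p\mid d$ the local factor $(1-p^{-s})^k(1+\sum_{\mu\ge1}\tau_k(p^{\mu+v_p(d)})/(\tau_k(p^{v_p(d)})p^{\mu s}))$ is $1+O(p^{-s}\cdot(\text{poly in }v_p(d)))$, uniformly, so taking the product over $p\mid d$ gives a bound that is $\ll_k \tau(d)^{C}$ on $\Re s=\tfrac12+\varepsilon$, say; one cheap way to get the clean factor $\tau(d)$ in the error term is to note $|\tau_k(dn)|\le \tau_k(d)\tau_k(n)$ pointwise, or to absorb powers of $\tau(d)$ into the $x^\varepsilon$ after splitting $d$ into smooth/rough parts — I'd choose whichever bookkeeping is least painful, probably the trivial bound $\tau_k(dn)\le \tau_k(d)\tau(d)^{k}\tau_k(n)$ is too lossy, so instead use the Euler-product bound directly.

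Next I would apply Perron's formula (in its truncated/effective form, e.g.\ Titchmarsh or Tenenbaum) to $\sum_{n\le x}\tau_k(dn)/\tau_k(d)$, moving the line of integration from $\Re s = 1+1/\log x$ to $\Re s = 1-\delta$ for a suitable $\delta\asymp 1/(k+2)$, picking up the residue at $s=1$, which by definition is exactly $xP_d(\log x)$ since $\zeta(s)^k$ has a pole of order $k$ there and the residue of $\zeta(s)^kG_{d,k}(s)e^{X(s-1)}/s$ at $s=1$ is a polynomial of degree $k-1$ in $X=\log x$. To control the shifted integral and the horizontal segments I'd use the classical zero-free region / convexity bound $\zeta(s)^k \ll (|\Im s|+1)^{k(1-\Re s)/2+\varepsilon}$ (or the simpler $\ll_\varepsilon (|\Im s|+1)^{k/2+\varepsilon}$ on the critical line) together with the uniform bound on $G_{d,k}$. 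Optimizing the truncation height $T$ against the line shift yields the error term $O_\varepsilon(\tau(d)x^{1-2/(k+2)+\varepsilon})$: the exponent $2/(k+2)$ is precisely what comes out of balancing $x^{1-\delta}T^{k/2}$ against $x/T$ with $T = x^\theta$, giving $\theta = 1-\delta$ forced, hence $1-\delta + k\theta/2$ minimized, yielding the stated savings. Finally I would identify the coefficients: writing the Laurent expansion $\zeta(s)^k(s-1)^k = 1 + O(s-1)$ and $G_{d,k}(s)/s$ as a Taylor series at $s=1$, the residue computation gives $P_d(X)=\sum_{j=0}^{k-1} a_{j,k}(d)X^{k-1-j}/(j!(k-1-j)!)$ with $a_{j,k}(d)$ the $j$-th Taylor coefficient of $\{\zeta(s)(s-1)\}^k G_{d,k}(s)/(sd^{s-1})$-type expression — matching the definition \eqref{eqdefbjk} up to the $d^{s-1}$ normalization, which I'd track carefully since it affects the precise form of the $a_{j,k}(d)$ versus the $b_{j,k}(d)$.

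The main obstacle is getting the dependence on $d$ to be as clean as $\tau(d)$ (and nothing worse) while keeping the error exponent at $1-2/(k+2)+\varepsilon$ uniformly in $d\le x$. The subtlety is that $G_{d,k}(s)$ genuinely can be as large as a small power of $\tau(d)$ on the shifted line, and $d$ itself may be almost as large as $x$, so a careless bound would produce $d^\varepsilon$ or $\tau(d)^{O_k(1)}$ factors that do not obviously collapse into $\tau(d) x^\varepsilon$. I expect the fix is to split according to whether $d \le x^{\varepsilon'}$ or not: for small $d$ all factors $\tau(d)^{O_k(1)}, d^\varepsilon$ are absorbed into $x^\varepsilon$ trivially, and for $d > x^{\varepsilon'}$ one uses the crude bound $\sum_{n\le x}\tau_k(dn)/\tau_k(d) \le \sum_{n\le x}\tau_k(n)\tau(d) \ll \tau(d) x(\log x)^{k-1}$ together with the fact that the main term $xP_d(\log x)$ is itself of size $\ll \tau(d) x(\log x)^{k-1}$, and since $x^{1-2/(k+2)+\varepsilon} \ge x^{1-2/(k+2)+\varepsilon'\cdot\text{(something)}} \cdot$ (large) the error bound $\tau(d)x^{1-2/(k+2)+\varepsilon}$ trivially dominates both. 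So the range $d>x^{\varepsilon'}$ is handled for free, and the Perron argument only needs to be run with full precision for $d\le x^{\varepsilon'}$, where uniformity in $d$ is a non-issue. This reduction is the one genuinely thoughtful point; everything else is routine contour-shifting and residue calculus.
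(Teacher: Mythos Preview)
Your approach---effective Perron, shift the contour left, pick up the order-$k$ pole at $s=1$, bound the shifted integral via convexity for $\zeta$ and a uniform bound on $G_{d,k}$---is exactly the paper's. Two details in your sketch need correction, though neither is a real obstacle.

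First, the contour must go to $\Re(s)=\varepsilon$ near $0$, not to $1-\delta$ with $\delta\asymp 1/(k+2)$. On $\Re(s)=1-\delta$ convexity gives only $\zeta(s)^k\ll |t|^{k\delta/2+\varepsilon}$, not $|t|^{k/2}$; balancing $x^{1-\delta}T^{k\delta/2}$ against $x/T$ yields error $x^{1-2\delta/(k+2)+\varepsilon}$, which matches the claim only when $\delta\approx 1$. (Your own line ``$\theta=1-\delta$ forced'' does not follow from the balancing and the subsequent minimization is garbled.) The paper simply shifts to $\Re(s)=\varepsilon$, uses $\zeta(\varepsilon+it)^k\ll_\varepsilon |t|^{k/2}$, and takes $T=x^{2/(k+2)}$. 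Second, your case split on the size of $d$ is unnecessary. From $\tau_k(p^{\mu+\nu})\le\tau_k(p^\mu)\tau_k(p^\nu)$ each local factor of $G_{d,k}(s)$ on $\Re(s)\ge\varepsilon$ is $O_{k,\varepsilon}(1)$ uniformly in $v_p(d)$, so $G_{d,k}(s)\ll_{k,\varepsilon}\tau(d)^{O_k(1)}$; since $d\le x$ in the application (Hypothesis~\ref{hyphd} only requires $d\le x$), any power of $\tau(d)$ beyond the first is absorbed into the $x^\varepsilon$. The Perron truncation tail is controlled by the pointwise bound $\tau_k(dn)/\tau_k(d)\le\tau_k(n)$, which you mention and then set aside---don't: this is precisely what the paper uses for the tail, and it is not lossy there since it only enters the $O(x^{1+\varepsilon}/T)$ term, not the contour integral.
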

\begin{proof}

Fix $\varepsilon > 0$. First note that for $\Re (s) \geq \varepsilon>0$ we have the bound
$$ G_{d,k}(s) \ll_{\varepsilon} \tau(d).  $$
One easily shows that $\tau_k(dn) \leq \tau_k(d)\tau_k(n)$, and hence the coefficients of the Dirichlet series we are interested in are bounded by $\tau_k(n)$. An effective Perron formula gives the estimate
$$ \sum_{n\leq x} \frac{\tau_k(dn)}{\tau_k(d) }=\frac 1{2\pi i} \int_{1+(\log x)^{-1}-iT}^{1+(\log x)^{-1}+iT} \zeta(s)^k G_{d,k}(s) x^s \frac{\d s}{s}+O_{\varepsilon}\Big( \frac {x^{1+\varepsilon}}T  \Big). $$ 
Shifting the contour of integration to the left we uncover a pole at $s=1$, and end up with an integral on the line  $\Re(s) = \varepsilon$, which is
$$ \ll_{\varepsilon} \tau(d) x^{\varepsilon} T^{\frac k2}.  $$
The result follows by taking $T= x^{\frac 2{k+2}}$.
\end{proof}

We now achieve the verification of the hypotheses for the sequence $f=\tau_k$.
\begin{proof}[Proof of Proposition \ref{proptauk}] 
To show Hypothesis \ref{hypequir}, we see that from the work of Bombieri, Friedlander and Iwaniec \cite{BFI86} we can deduce the existence of constants $c'>0$ and $A_k$ such that for any fixed $k\geq 1$ we have $$\sum_{q\leq Q}\max_{y\leq x} \max_{\substack{1\leq a\leq q\\ (a,q)=1}}\Bigg|\sum_{\substack{n\leq y \\ n\equiv a \bmod q }}\tau_k(n)-\frac{1}{\phi(q)} \sum_{\substack{n\leq y \\ (n, q)=1}}\tau_k(n)\Bigg|\ll x^{1-c'/k}+Q\sqrt{x}(\log x)^{A_k}.$$
Taking $Q=x^{1/2-c/k}$, this is
$\ll_k x^{1-c'/k} (\log x)^{A_k}$. This bound can be extended to values of $a$ that are not necessarily coprime to $q$  (note also that it is trivial for $a=0$), and hence Hypothesis~\ref{hypequir} holds with the choice $L(x)=x^{ c'/k} (\log x)^{-A_k}\gg x^{ c /k} $, for any $0<c <c'$. Note that Hypothesis~\ref{hypequir} also follows from \cite{M76}. 

We now proceed to the verification of Hypothesis \ref{hyphd}. A calculation shows that the coefficients of Lemma \ref{lemperron} satisfy
$$a_{j,k}(d)=\Big(\frac{\{\zeta(s)(s-1)\}^k G_{d,k}(s)}{s}\Big)^{(j)}(1).$$
Hence, by the same Lemma we have that for $d\leq x$
\begin{equation}
\label{eqAd}
 \A_d(x) =\sum_{\substack{n\leq x \\ d\mid n}} \tau_k(n) = \frac{\tau_k(d)}{d}xP_d(\log x-\log d) + O_{\varepsilon}\Big(x^{\varepsilon}\Big(\frac{x}d \Big)^{1-\frac 2{k+2}}\Big).
\end{equation} 
 Expanding the main term in this expression gives that
\begin{align*}P_d(\log x&-\log d)
 = \sum_{j=0}^{k-1} a_{j,k}(d)\frac{(\log x-\log d)^{k-1-j}}{j!(k-j-1)!}\\
&= \sum_{j=0}^{k-1} 
\frac{a_{j,k}(d)}{j!(k-j-1)!}\sum_{m=0}^{k-j-1}\Big(\mycom{k-j-1}{m}\Big)(\log x)^{k-1-j-m}(-\log d)^{m}\\
&=\sum_{\ell=0}^{k-1} b_{\ell,k}(d)\frac{(\log x)^{k-1-\ell}}{ \ell!(k-\ell-1)!},
\end{align*}
where the $b_{\ell,k}(d)$ are defined in \eqref{eqdefbjk}. Indeed a calculation shows that
$$b_{j,k}(d)= \sum_{\ell=0}^j\Big(\mycom{j}{\ell}\Big)
a_{\ell,k}(d)(-\log d)^{ j-\ell }.$$

Hence, 
\begin{align*} xP_d(\log x-\log d)&=\int_0^x \Big(\sum_{\ell=0}^{k-2} \frac{b_{\ell,k}(d)}{\ell!}\Big[\frac{(\log t)^{k-1-\ell}}{(k-\ell-1)!}+\frac{(\log t)^{k-2-\ell}}{(k-\ell-2)!} \Big] +\frac{b_{k-1,k}(d)}{(k-1)!}  \Big) \d t \\
&=  \sum_{j=0}^{k-1}  \Big[\frac{b_{j,k}(d)}{j!}+\frac{b_{j-1,k}(d)}{(j-1)!} \Big]\int_0^x \frac{(\log t)^{k-1-j}}{(k-1-j)!}  \d t.
\end{align*}
 Substituting this in \eqref{eqAd}, it follows that Hypothesis \ref{hyphd} holds with the choice $J=k,$
\begin{equation} u_{j,k}(t)= \frac{(\log t)^{k-1-j}}{(k-1-j)!};
\label{eqdefujk} \end{equation}
and $h_{j,k}(d)$ as defined in \eqref{eqdefhjk}. 
In particular we have
\begin{equation}\label{calculh0d}
h_{0,k}(d)=\tau_k(d)a_{0,k}(d)=\tau_k(d)G_{d,k}(1).\end{equation}

The last thing to show is the bound on $|(h_{j,k}*\mu)(d)|$. This will be done in Lemma \ref{lemmajhj}.
\end{proof}

\begin{lemma}
\label{lemmajhj}
Let $h_{j,k}$ be defined as in \eqref{eqdefhjk}. There exists an arithmetical function 
$\tilde h_k$ such that that the following two bounds hold:
\begin{equation}
\label{sumtildeh}
\sum_{d=1}^\infty \frac{|\tilde h_k(d)|}{d}\ll_k 1;
\end{equation}
\begin{equation}\label{majhj*muq}|(h_{j,k}*\mu)(d)|\ll_k (\tau_{k-1}*|\tilde h_k|)(d) (\log d)^{j}.
\end{equation}
\end{lemma}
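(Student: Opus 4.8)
The plan is to factor the Dirichlet series defining the $b_{j,k}(d)$ (hence the $h_{j,k}(d)$) through the product $\zeta(s)^{k-1}$ times a ``nice'' Euler product, and to differentiate. Recall from \eqref{eqdefbjk} that $b_{j,k}(d)$ is the $j$-th derivative at $s=1$ of
$$ \Phi_d(s) := \frac{\{\zeta(s)(s-1)\}^k G_{d,k}(s)}{s\, d^{s-1}}. $$
Since $\{\zeta(s)(s-1)\}^k$ is holomorphic and nonvanishing near $s=1$, and since $G_{d,k}(s)=\prod_{p\mid d}(1-p^{-s})^k(1+\sum_{\mu\geq 1}\tau_k(p^{\mu+v_p(d)})\tau_k(p^{v_p(d)})^{-1}p^{-\mu s})$ is an Euler product supported on primes dividing $d$, I would write $\Phi_d(s)=\zeta(s)^{-1} \Psi_d(s)$ where $\Psi_d(s)$ absorbs the factor $\zeta(s)^{k}(s-1)^{k}/(s d^{s-1})$ together with the correction $(1-p^{-s})$ factors, so that the coefficient sequence of $\zeta(s)^{-1}$ contributes the Möbius convolution in $(h_{j,k}*\mu)(d)$. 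Concretely, from \eqref{eqdefhjk} one has $h_{j,k}(d)=\tau_k(d)(b_{j,k}(d)/j!+b_{j-1,k}(d)/(j-1)!)$, so it suffices to bound $\tau_k(d)|b_{j,k}(d)|$ and then convolve with $\mu$; the convolution with $\mu$ is what produces the implicit ``$\tilde h_k$'' below.

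The key steps, in order, are: (1) Show $|b_{j,k}(d)|\ll_k (\log 2d)^j \cdot c_k(d)$, where $c_k(d)$ is a small multiplicative function, by applying Cauchy's estimate for derivatives on a fixed circle $|s-1|=1/2$ and using the bound $G_{d,k}(s)\ll_\varepsilon \tau(d)$ from the proof of Lemma \ref{lemperron} (more carefully: on $|s-1|=1/2$ the Euler factor at $p\mid d$ is $1+O_k(p^{-1/2})$, so $\log G_{d,k}(s)\ll_k\sum_{p\mid d}p^{-1/2}$, giving $G_{d,k}(s)\ll_k \prod_{p\mid d}(1+O_k(p^{-1/2}))=:g_k(d)$; the $d^{s-1}$ factor on $|s-1|=1/2$ is $\ll d^{1/2}$ but is cancelled once we track that $b_{j,k}$ is really the derivative of $G_{d,k}(s)/(sd^{s-1})$ times a $d$-independent holomorphic function, so expanding $d^{1-s}=\sum_{m\geq 0}(\log d)^m(1-s)^m/m!$ near $s=1$ produces only powers of $\log d$, not powers of $d$). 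This yields $|b_{j,k}(d)|\ll_k (\log 2d)^j g_k(d)$. (2) Multiply by $\tau_k(d)$: $\tau_k(d)|b_{j,k}(d)|\ll_k \tau_k(d)g_k(d)(\log 2d)^j$. (3) Define $\tilde h_k$ by the Euler product $\sum_d \tilde h_k(d)d^{-s}=\prod_p\big(\text{local factor of } \tau_k(d)g_k(d)/\tau_{k-1}(d)\big)$, i.e. roughly $\tilde h_k$ is chosen so that $\tau_k g_k = \tau_{k-1}*|\tilde h_k|$ up to the $(\log d)^j$ factor; concretely factor $\tau_k=\tau_{k-1}*\mathbf 1$ and absorb the discrepancy between $\mathbf 1 \cdot g_k$ and the true coefficients. (4) Convolve with $\mu$: $(h_{j,k}*\mu)(d)\ll_k \big((\tau_k g_k)(\cdot)(\log 2\cdot)^j *|\mu|\big)(d)\ll_k (\tau_{k-1}*|\tilde h_k|)(d)(\log d)^j$ after collecting the $\mu$ into the definition of $\tilde h_k$ and using $\log(2d)\le \log(2r)+\log(2d/r)$ submultiplicativity to pull the log-power out. (5) Finally verify \eqref{sumtildeh}: $\sum_d |\tilde h_k(d)|/d = \prod_p(1+O_k(p^{-3/2})+\dots)<\infty$, because the defining Euler factors of $\tilde h_k$ agree with $1$ up to $O_k(p^{-3/2})$ — the $p^{-1}$ and part of the $p^{-1/2}\cdot$(something) terms are exactly cancelled by the choice of $g_k$ and the extraction of $\tau_{k-1}$ and $\mu$.

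The main obstacle is bookkeeping step (3)–(4): matching the local factors so that, after dividing out $\tau_{k-1}$ and convolving away the $\mu$, what remains has an absolutely convergent $\sum |\tilde h_k(d)|/d$. The delicate point is that $G_{d,k}(s)$ is only supported on $p\mid d$, so its ``local factor at $p$'' is $1$ when $p\nmid d$; one must therefore be careful that $\tilde h_k$ really is a genuine arithmetical function (independent of $d$) and that the inequality \eqref{majhj*muq} holds pointwise in $d$, not just on average. I expect this is handled by comparing Dirichlet series coefficient-wise: writing each side as a multiplicative function (or a convolution of such) and checking the inequality prime-by-prime at $p^\nu$, where it reduces to a finite, $k$-dependent but $p$-uniform verification using $\tau_k(p^{\mu+a})/\tau_k(p^a)\le \tau_k(p^\mu)\cdot(1+O_k(1/a))$ and the explicit shape of the binomial coefficients in $\tau_k(p^\nu)=\binom{\nu+k-1}{k-1}$.
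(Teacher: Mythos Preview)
Your approach has a genuine gap at steps (3)--(4). Once you bound $|h_{j,k}(d)|\ll_k \tau_k(d)g_k(d)(\log 2d)^j$ pointwise and then apply the triangle inequality to the convolution, you get
\[
|(h_{j,k}*\mu)(d)|\le \sum_{e\mid d}|h_{j,k}(e)|\,|\mu(d/e)|\ll_k (\log d)^j\sum_{e\mid d}\tau_k(e)g_k(e),
\]
and already $\sum_{e\mid d}\tau_k(e)=\tau_{k+1}(d)$, not $\tau_{k-1}(d)$. The entire point of the $\mu$-convolution is that it removes one $\zeta$-factor, dropping $\tau_k$ to $\tau_{k-1}$; that cancellation is destroyed as soon as you put absolute values inside the convolution. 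No choice of $\tilde h_k$ with $\sum_d|\tilde h_k(d)|/d<\infty$ can satisfy $(\tau_{k-1}*|\tilde h_k|)(d)\gg \tau_{k+1}(d)$ for all $d$, so the argument as written cannot close. There is a second, related problem: your $g_k(d)=\prod_{p\mid d}(1+O_k(p^{-1/2}))$, coming from Cauchy on $|s-1|=\tfrac12$, carries a $p^{-1/2}$ rather than a $p^{-1}$; after stripping $\tau_{k-1}$ the residual local factor would still be $1+O_k(p^{-1/2})$, and $\sum_d|\tilde h_k(d)|/d$ would diverge.

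The paper avoids both issues by never bounding $h_{j,k}(d)$ pointwise. Instead it forms the Dirichlet series in a \emph{new} variable $z$, summing over $d$: one has
\[
\sum_{d\ge1}\frac{\tau_k(d)G_{d,k}(s)}{d^{z+s-1}}=\zeta(z+s-1)^k H_k(s,z),
\]
with $H_k$ absolutely convergent for $\Re(s)+\Re(z)>\tfrac32$. Differentiating $j$ times in $s$, setting $s=1$, and dividing by $\zeta(z)$ (this is the $\mu$-convolution, now performed on the level of generating series so the cancellation is exact) leaves a linear combination of products $\zeta^{(j_1)}\cdots\zeta^{(j_r)}\zeta^{k-r-1}(z)\cdot\partial_s^{j_0}H_k(1,z)$, whose $z$-coefficients are visibly $\ll_k(\log d)^j(\tau_{k-1}*|\tilde h_k|)(d)$ with $\tilde h_k$ the coefficient sequence of $H_k(1,z)$. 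The convergence \eqref{sumtildeh} then follows from $G_{p^\nu,k}(1)=1+O_k(1/p)$, which gives $|\tilde h_k(p^\mu)|\ll_k\tau_k(p^\mu)/p$. This Dirichlet-series route is not just cleaner bookkeeping: it is what makes the $\tau_{k-1}$ (rather than $\tau_{k+1}$) and the $1/p$ (rather than $1/\sqrt p$) appear.
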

\begin{proof}

By definition of the $h_{j,k}$ (see \eqref{eqdefhjk}), we have that
\begin{equation}\label{defnewhjk}\begin{split}h_{j,k}(d)
&=
\frac{\tau_k(d)}{j!}\Big(\frac{\{\zeta(s)(s-1)\}^k G_{d,k}(s)}{sd^{s-1}}\Big)^{(j)}(1)\cr&\quad
+\frac{\tau_k(d)}{(j-1)!}\Big(\frac{\{\zeta(s)(s-1)\}^k G_{d,k}(s)}{sd^{s-1}}\Big)^{(j-1)}(1)
.
\end{split}\end{equation}
To tackle the size of $h_{j,k}*\mu$, we study the associated Dirichlet series. We have that
\begin{align*}
\sum_{d=1}^\infty \frac{h_{j,k}(d)}{d^z}&=
\frac{1}{j!}\Big(\frac{\{\zeta(s)(s-1)\}^k }{s }\sum_{d=1}^\infty \frac{\tau_k(d)G_{d,k}(s)}{d^{z+s-1}}\Big)^{(j)}(s=1)\cr
&\quad +
\frac{1}{(j-1)!}\Big(\frac{\{\zeta(s)(s-1)\}^k }{s }\sum_{d=1}^\infty \frac{\tau_k(d)G_{d,k}(s)}{d^{z+s-1}}\Big)^{(j-1)}(s=1).\end{align*}
Defining $H_k(s,z)$ by 
\begin{equation}\label{defHk}\begin{split}
\sum_{d=1}^\infty \frac{\tau_k(d)G_{d,k}(s)}{d^{z+s-1}}&=\prod_{p}\Big(1+\sum_{\nu\geq 1}\frac{\tau_k(p^\nu)G_{p^\nu,k}(s)}{p^{\nu(z+s-1)}}\Big)\cr
&=\zeta(z+s-1)^kH_k(s,z)
,\end{split}\end{equation}
we see that $H_k(s,z)$ converges absolutely in the region $\Re(s)+\Re(z)>\tfrac 32$ (this holds for example when $\Re e (s)>\tfrac 34$ and $\Re e (z)>\tfrac 34$), 
hence
$$
 \Big( \sum_{d=1}^\infty \frac{\tau_k(d)G_{d,k}(s)}{d^{z+s-1}}\Big)^{(j)} =
\sum_{r}c(j_0,j_1, \ldots,j_r) \big(\zeta^{(j_1)}\cdots \zeta^{(j_r)}\zeta^{k-r}\big)(z+s-1)H_k(s,z)^{(j_0)},$$
where the variables in the sum satisfy $j_1+\ldots+j_r+j_0=j\leq k-1$ and $r \leq j$. Hence
\begin{equation}
 \zeta(z)^{-1}\Big( \sum_{d=1}^\infty \frac{\tau_k(d)G_{d,k}(s)}{d^{z+s-1}}\Big)^{(j)}(s=1) =
\sum_{r}c(j_0, \ldots,j_r) \big(\zeta^{(j_1)}\cdots \zeta^{(j_r)}\zeta^{k-r-1}\big)(z )H_k(1,z)^{(j_0)}.
\label{eqdefgjk}
\end{equation}

Define $\tilde h_{k}$ to be the arithmetical function associated to the Dirichlet series $H_k(1,z)$. To deduce the desired bound \eqref{majhj*muq}, we denote by  $g_{j,k}$ the arithmetical function associated to the Dirichlet series in \eqref{eqdefgjk}. We have
$$g_{j,k}=\sum_{r}c(j_0, \ldots,j_r) \big((-\log)^{j_1}*\cdots * (-\log)^{j_r}*\tau_{k-r-1}\big)*(\tilde h_{k }(-\log )^{j_0}),$$
Hence $$g_{j,k}(d)\ll (\log d)^j\sum_{r}  \big(\big(1*\cdots * 1*\tau_{k-r-1}\big)*|\tilde h_{k} |\big)(d)\ll
(\log d)^j  \big( \tau_{k -1} *|\tilde h_{k} |\big)(d).$$
Since $h_{j,k}*\mu$ is a linear combination of $g_{j',k}$ with $j'\leq j$, we need to show the bound \eqref{sumtildeh}.
We have
$$G_{p^\nu,k}(1)=\Big( 1-\frac{1}{p}\Big)^k \Big(1+\sum_{\mu\geq 1}
\frac{\tau_k(p^{\mu+\nu})}{\tau_k(p^{\nu})p^{\mu }}\Big)=1+O_k(1/p) $$
and hence, by \eqref{defHk},
$$1+\sum_{\mu\geq 1}\frac{\tilde h_{k}(p^\mu)}{p^{\mu z}}=
1+\Big(1-\frac{1}{p^z}\Big)^{k}
\sum_{\mu\geq 1}\frac{\tau_k(p^\mu) }{p^{\mu z}}\big(G_{p^\mu,k}(1)-1 \big)$$
which then implies that
$$\big|\tilde h_{k}(p^\mu)\big| \ll_k \tau_k(p^\mu)/p.$$
 The bound \eqref{sumtildeh} follows. 

\end{proof}

We now deduce the following corollary of Theorem \ref{thmoment1}, which implies Theorem \ref{thmomenttauk1}.
\begin{corollary}Let $k\geq 2$. If $f=\tau_k$, then the choices  in Proposition \ref{proptauk} give that for $a\neq 0$, $-x/N < a \leq x $, $N+1\leq \min\{x^{1/2-c/k}, R\}$, we have the bound
$$|M_{1}(x,N, R;a)|+|\widetilde M_{1}(x,N, R;a)|\ll  U_0(x)\Big\{\frac{NR(\log R)^{k}}{x}  +\frac{\tau(a)}{x^{c'/k} }\Big\},$$ where $c$ and $c'$ are two positive absolute constants coming from the second remark in Lemma~\ref{proptauk}.
\end{corollary}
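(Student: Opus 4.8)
The plan is to apply Theorem \ref{thmoment1} directly with the specific choices of parameters dictated by Proposition \ref{proptauk}, and then to estimate the two arithmetical quantities appearing on the right-hand side. Recall that Proposition \ref{proptauk} gives us Hypotheses \ref{hyphd} and \ref{hypequir} for $f=\tau_k$ with $J=k$, $u_{j,k}(t)=(\log t)^{k-1-j}/(k-1-j)!$, $h_{j,k}$ as in \eqref{eqdefhjk}, $Q(x)=x^{1/2-c/k}$, and $L(x)=x^{c'/k}(\log x)^{-A_k}\gg x^{c/k}$. Here $U_0(x)=\int_0^x u_{0,k}(t)\d t\asymp x(\log x)^{k-1}/(k-1)!$. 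In the range $N+1\leq\min\{x^{1/2-c/k},R\}=\min\{Q(x),R\}$, Theorem \ref{thmoment1} bounds $|M_1(x,N,R;a)|+|\widetilde M_1(x,N,R;a)|$ by
\begin{equation*}
\ll N(\log R)\, u_1^*(x,R)+\tau(a)\frac{U_0(x)}{L(x)},
\end{equation*}
so it remains to show $N(\log R)u_1^*(x,R)\ll U_0(x)NR(\log R)^k/x$, i.e. that $u_1^*(x,R)\ll U_0(x)R(\log R)^{k-1}/x\asymp R(\log R)^{k-1}\cdot(\log x)^{k-1}/(k-1)!$ — wait, more carefully, $u_1^*(x,R)\ll R(\log R)^{k-1}(\log x)^{k-1}$, up to constants depending on $k$. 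The term $\tau(a)U_0(x)/L(x)$ directly gives the $\tau(a)U_0(x)x^{-c'/k}$ contribution since $L(x)\gg x^{c'/k}$ (absorbing the log power).

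The core estimate is therefore the bound on $u_1^*(x,R)=\sum_{0\leq j<k}(|u_{j,k}(x)|+1)\sum_{r\leq R}|(h_{j,k}*\mu)(r)|r/\phi(r)$. Since $|u_{j,k}(x)|+1\ll_k(\log x)^{k-1}$ uniformly in $0\leq j<k$, it suffices to show $\sum_{0\leq j<k}\sum_{r\leq R}|(h_{j,k}*\mu)(r)|r/\phi(r)\ll_k R(\log R)^{k-1}$. For this I would invoke the bound \eqref{eqmajhjkhtilde} from Proposition \ref{proptauk}, namely $|(h_{j,k}*\mu)(r)|\ll_k(\tau_{k-1}*|\tilde h_k|)(r)(\log r)^j\ll_k(\tau_{k-1}*|\tilde h_k|)(r)(\log R)^{k-1}$ for $r\leq R$. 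Thus the inner sum is
\begin{equation*}
\ll_k(\log R)^{k-1}\sum_{r\leq R}\frac{r}{\phi(r)}(\tau_{k-1}*|\tilde h_k|)(r).
\end{equation*}
Using $r/\phi(r)\ll\log\log r$ and writing $r=de$ with $d$ contributing $\tau_{k-1}(d)$ and $e$ contributing $|\tilde h_k(e)|$, one bounds $\sum_{r\leq R}(r/\phi(r))(\tau_{k-1}*|\tilde h_k|)(r)\ll_k\sum_{e\leq R}|\tilde h_k(e)|\sum_{d\leq R/e}\tau_{k-1}(d)(\log\log(de)+1)\ll_k R(\log R)^{k-2}(\log\log R)\sum_{e}|\tilde h_k(e)|/e$, and the last sum is $\ll_k 1$ by \eqref{sumtildeh}. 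Absorbing the $\log\log R$ into a power of $\log R$ (or being slightly more careful, it is genuinely $\ll_k R(\log R)^{k-1}$ since $\sum_{d\leq y}\tau_{k-1}(d)\ll_k y(\log y)^{k-2}$ and the $\log\log$ factor only costs a bounded amount when summed against the $1/e$ weights and the main $d$-sum), we obtain $u_1^*(x,R)\ll_k(\log x)^{k-1}(\log R)^{k-1}R$, which together with $U_0(x)\asymp_k x(\log x)^{k-1}$ gives $N(\log R)u_1^*(x,R)\ll_k U_0(x)NR(\log R)^k/x$ as required.

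Finally, since $a\neq 0$ we have $\tau(a)\geq 1$ (harmless) and the stated hypothesis $N+1\leq\min\{x^{1/2-c/k},R\}$ is exactly the condition $N+1\leq\min\{Q(x),R\}$ required by Theorem \ref{thmoment1}, so no further adjustment of parameters is needed; the same bound for $\widetilde M_1$ is provided by the last sentence of Theorem \ref{thmoment1}. I expect the main (though still routine) obstacle to be the careful handling of the $r/\phi(r)$ factor and the convolution $\tau_{k-1}*|\tilde h_k|$ in the divisor sum — specifically confirming that replacing $(\log r)^j$ by $(\log R)^{k-1}$ and summing the convolution against $1/\phi(r)$-type weights does not cost more than the claimed $R(\log R)^k$ (after the extra $\log R$ from $|u_{j,k}(x)|+1\ll(\log x)^{k-1}$ is accounted for), which hinges on the absolute convergence \eqref{sumtildeh} of the Dirichlet series of $\tilde h_k$. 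Deducing Theorem \ref{thmomenttauk1} from this corollary is then immediate upon noting $U_0(x)(\log R)^k/x\ll(\log x)^{k}(\log R)^k/((k-1)!)\ll(\log R\log x)^k$ in the stated range and $U_0(x)x^{-c'/k}\ll x^{1-\eta'/k}$ for a suitable $\eta'$.
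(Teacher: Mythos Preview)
Your approach is the same as the paper's—apply Theorem~\ref{thmoment1} and bound $u_1^*(x,R)$ using the pointwise estimate \eqref{eqmajhjkhtilde}—but there is a genuine bookkeeping slip in the middle.

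You bound $|u_{j,k}(x)|+1\ll_k(\log x)^{k-1}$ uniformly in $j$ and then declare it suffices to show
\[
\sum_{0\le j<k}\sigma_j(R)=\sum_{0\le j<k}\sum_{r\le R}\frac{|(h_{j,k}*\mu)(r)|\,r}{\phi(r)}\ll_k R(\log R)^{k-1}.
\]
But your own chain of inequalities does not give this: you replace $(\log r)^j$ by $(\log R)^{k-1}$ and then bound the remaining sum by $R(\log R)^{k-2}\log\log R$, which combined yields only $\sigma_j(R)\ll R(\log R)^{2k-3}\log\log R$. For $k\ge 3$ this exceeds $R(\log R)^{k-1}$, so the ``it suffices'' target is not reached by your argument (and in fact $\sigma_{k-1}(R)$ is genuinely larger than $R(\log R)^{k-1}$, so the target itself is too optimistic). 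Consequently the jump to $u_1^*(x,R)\ll(\log x)^{k-1}(\log R)^{k-1}R$ is unjustified.

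The fix, which is what the paper does, is to keep the $j$-dependence in \emph{both} factors rather than maximising each separately. One has $|u_{j,k}(x)|+1\ll_k(\log x)^{k-1-j}$ and, from \eqref{eqmajhjkhtilde}, $\sigma_j(R)\ll_k R(\log R)^{k+j-1}$. Hence
\[
u_1^*(x,R)\ll_k R\sum_{0\le j<k}(\log x)^{k-1-j}(\log R)^{k-1+j}\ll_k R(\log x)^{k-1}(\log R)^{k-1},
\]
the last step using $\log R\le\log x$. The decreasing factor $(\log x)^{k-1-j}$ exactly compensates the increasing $(\log R)^{j}$; throwing either away costs an extra $(\log R)^{k-1}$. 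With this corrected bound the rest of your argument (and the deduction of Theorem~\ref{thmomenttauk1}) goes through.
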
 
 
\begin{proof} 
We need to bound the quantities $\sigma_j(R)$ and $u_1^*(x,R)$ appearing in \eqref{defsigmaR} and \eqref{defu1*xR}. The bound~\eqref{majhj*muq} implies that
$$\qquad\qquad\qquad\qquad\sigma_j(R)\ll R(\log R)^{k+j -1}\qquad\qquad\qquad\qquad (0\leq j\leq k-1),$$
and hence
\begin{equation}\label{maju*xR}
u_1^*(x,R)\ll u_0(x)R(\log R)^{k-1}.
\end{equation}

\end{proof}

Note that we can deduce the following bound from the last proof:
\begin{equation}\label{precisionhj}
(h_{j,k}*\mu)(d)=\frac{1}{j!}\big((h_{0,k} (-\log  )^{j})*\mu\big)(d)
+O\Big( (\tau_{k-1}*|\tilde h|)(d) (\log d)^{j-1}\Big).
\end{equation}
Indeed the terms with $j_0\geq 1$ are all bounded by the error term. In a sense this means that up to this error term we can replace $G_{d,k}(s)$ by $G_{d,k}(1)$ in the definition \eqref{defnewhjk} of $h_{j,k}$.

\section{Variance of the sum of divisors function}

\label{section variance tauk}

In this section we apply Theorem \ref{thmoment2} to the function $f=\tau_k$ and evaluate the main terms. 

First note that there exists a polynomial $Q_k$ of degree $k^2-2$ such that
$$\|\tau_k\|_2^2 =C_k x\frac{(\log x)^{k^2-1}}{(k^2-1)!}\Big(1+\frac 1{\log x}Q_k\Big(\frac 1{\log x} \Big)+O\Big(\frac{1}{ x^{2/(k^2+2) -\varepsilon}}\Big)\Big),$$
where
$$ C_k := \prod_p \Big( 1-\frac 1p \Big)^{k^2}\Big( \sum_{\nu\geq 0 }\frac{  \tau_{k}(p^{\nu})^2}{ p^{\nu}}\Big) . $$
Each of the factors of this product is a polynomial in $1/p$. Indeed\footnote{See \cite[Page 7, Footnote 3]{RS16}},
$$ C_k  = \prod_p \Big( 1-\frac 1p \Big)^{(k-1)^2}\Big( \sum_{0\leq\nu\leq k-1 }\Big( \mycom{k -1}{ \nu}\Big)^2\frac{ 1}{ p^{\nu}}\Big) . $$

\begin{theorem}\label{thmomenttauk2} Let $k\geq 2$ and consider the arithmetical function $f=\tau_k$. There exists an absolute constant $c'>0$ such that for $R\leq x^{c'/k}$ we have
\begin{equation}\label{estM2taukDELTA} \begin{split}
\|\Delta_R\|_2^2 &= C_k x \frac{(\log x)^{k^2-1}}{(k^2-1)!} \Big(1-\Big(\frac{\log R}{\log x} \Big)^{(k-1)^2}P_k\Big(\frac{\log R}{\log x} \Big)+\frac 1{\log x} Q_k\Big( \frac 1{\log x}\Big)\Big) \\&\hspace{2cm}+ O\big( x(\log x)^{2k-2}(\log R)^{k^2-2k}\big), \end{split}
\end{equation}
where $P_k$ is a polynomial of degree $2k-2$ defined by
$$P_k(v):= \sum_{0\leq j,j' < k} \frac{(k^2-1)!c_{j,j'} v^{j+j'}}{(k-j-1)!(k-j'-1)!} ; \hspace{.2cm} c_{j,j'}:= {\rm Res}_{s=0}{\rm Res}_{\substack{s_1=0\\ s_2=0}}\frac{ \e^{s} (s+s_1)^k(s+s_2)^k  }{s_1^{j+1}s_2^{j'+1}s^2(s+s_1+s_2)^{k^2}}  .$$
\end{theorem}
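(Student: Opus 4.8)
The plan is to apply Theorem~\ref{thmoment2} with the data of Proposition~\ref{proptauk}, namely $J=k$, $u_{j,k}(t)=(\log t)^{k-1-j}/(k-1-j)!$, and $h_{j,k}$ as in \eqref{eqdefhjk}, and then to identify the two main terms on the right of \eqref{estM2DELTA} with the claimed expressions. The first main term, $\|\tau_k\|_2^2$, is already recorded just before the statement: it contributes $C_k x\frac{(\log x)^{k^2-1}}{(k^2-1)!}(1+\frac{1}{\log x}Q_k(\frac 1{\log x}))$ up to a power-saving error. The substance is therefore the second main term
$$
S(x,R):=\sum_{r\le R}\frac{1}{\phi(r)}\int_0^x\Big(\sum_{0\le j<k}(h_{j,k}*\mu)(r)\,u_{j,k}(t)\Big)^2\,\d t,
$$
which I claim equals $C_k x\frac{(\log x)^{k^2-1}}{(k^2-1)!}(\frac{\log R}{\log x})^{(k-1)^2}P_k(\frac{\log R}{\log x})$ up to the stated error. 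First I would carry out the $t$-integral: expanding the square and integrating $(\log t)^{k-1-j}(\log t)^{k-1-j'}$ from $0$ to $x$ gives, after the standard $\int_0^x(\log t)^m\d t=x\sum_{i\ge 0}(-1)^i m!/(m-i)!\,(\log x)^{m-i}$ expansion, a main term $x(\log x)^{2k-1-j-j'}/(2k-1-j-j')\cdot(\text{binomials})$ plus lower-order terms absorbed into the error once summed against $r$. So $S(x,R)$ becomes $x$ times a bilinear form in $j,j'$ with coefficients $(h_{j,k}*\mu)(r)(h_{j',k}*\mu)(r)/\phi(r)$ summed over $r\le R$.

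The core step is evaluating $\sum_{r\le R}(h_{j,k}*\mu)(r)(h_{j',k}*\mu)(r)/\phi(r)$. Using \eqref{precisionhj}, up to an error governed by $\tau_{k-1}*|\tilde h_k|$ (which, by \eqref{sumtildeh}, sums against $1/\phi(r)$ over $r\le R$ to $O((\log R)^{?})$-size contributions that feed into the $O(x(\log x)^{2k-2}(\log R)^{k^2-2k})$ error), I can replace $(h_{j,k}*\mu)(r)$ by $\frac1{j!}\big((h_{0,k}(-\log)^{j})*\mu\big)(r)$, i.e.\ effectively work with $h_{0,k}(d)=\tau_k(d)G_{d,k}(1)$ dressed by powers of $-\log d$. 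Then $\sum_r h_{0,k}(d)\cdots$ is handled via its Dirichlet series: from \eqref{defHk} the series $\sum_d \tau_k(d)G_{d,k}(1)(h_{0,k}*\mu)(d)\cdots/d^z$ type objects are built from $\zeta(z)$-powers times absolutely convergent Euler products, so a Perron/contour argument (shift to $\Re z=\varepsilon$, pick up the pole at $z=1$ of order determined by the $\zeta$-powers; here the relevant order produces $(\log R)^{(k-1)^2+j+j'}$) yields
$$
\sum_{r\le R}\frac{(h_{j,k}*\mu)(r)(h_{j',k}*\mu)(r)}{\phi(r)}= C_k\,\frac{c_{j,j'}\,(\log R)^{(k-1)^2+j+j'}}{j!\,j'!}+O\big((\log R)^{(k-1)^2+j+j'-1}\big),
$$
with $c_{j,j'}$ the double-residue constant in the statement — here I would compute the leading singularity of $\{\zeta(s)(s-1)\}^k$ derivatives via the generating-function/residue formalism, matching exactly the $\mathrm{Res}_{s=0}\mathrm{Res}_{s_1=s_2=0}$ expression (the $\e^s$ and $(s+s_i)^k$ factors encode $\zeta(s)(s-1)\sim 1$ near $s=1$ after the change of variables $s\mapsto 1+s$, and the $(s+s_1+s_2)^{-k^2}$ reflects the order-$k^2$ pole of the diagonal $r=r'$ Dirichlet series $\sum 1/(\phi(r)r^{\cdots})$). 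Assembling: the bilinear form contributes $C_k x\sum_{j,j'}\frac{c_{j,j'}}{j!j'!}\cdot\frac{(\log R)^{(k-1)^2+j+j'}}{2k-1-j-j'}\cdot(\text{the binomial from the }t\text{-integral})$, and a short rearrangement shows this matches $C_k x\frac{(\log x)^{k^2-1}}{(k^2-1)!}(\frac{\log R}{\log x})^{(k-1)^2}P_k(\frac{\log R}{\log x})$ with $P_k$ as defined. Finally I must check the hypotheses of Theorem~\ref{thmoment2} are met in the range $N\le R\le x^{\eta/k}$ (using $L(x)=x^{c'/k}$, $Q(x)=x^{1/2-c/k}$, and the bounds $u_1^*(x,R)\ll u_0(x)R(\log R)^{k-1}$, $u_2^*(x,R)\ll u_0(x)R^2(\log R)^{?}$ from Proposition~\ref{proptauk}), so that the error terms $u_2^*(x,R)^2$ and $U_0(x)u_2^*(x,R)/(L(x)(\log Q(x))^2)$ in \eqref{estM2DELTA} are both $\ll x(\log x)^{2k-2}(\log R)^{k^2-2k}$; this forces $R$ to be only a small power of $x$, consistent with $R\le x^{c'/k}$.

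The main obstacle is the precise residue computation: one must show that the leading constant in $\sum_{r\le R}(h_{j,k}*\mu)(r)(h_{j',k}*\mu)(r)/\phi(r)$ is exactly $C_k c_{j,j'}/(j!j'!)$, which requires (i) carefully tracking that the Euler product $\prod_p(1-1/p)^{-1}\times(\text{local factors of }\tau_k(d)^2$-type$)$ collapses to $C_k=\prod_p(1-1/p)^{k^2}(\sum_\nu \tau_k(p^\nu)^2/p^\nu)$ after accounting for the $\phi(r)$ in the denominator and the $(h_{0,k}*\mu)$ Möbius twist, and (ii) setting up the triple contour integral so that the order-$k^2$ pole (from $\zeta(z)^{k^2}$ after multiplying the two $\zeta^k$-ish series and dividing by $\zeta$ twice via $\phi$) is read off as the double residue at $s_1=s_2=0$ of the $(s+s_1+s_2)^{-k^2}$ factor. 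Keeping the bookkeeping of lower-order terms within the claimed error $O(x(\log x)^{2k-2}(\log R)^{k^2-2k})$ throughout — in particular checking that the $j_0\ge 1$ terms from \eqref{precisionhj} and the $t$-integral remainders never exceed this — is the delicate but routine part. I would also double-check via the sanity check flagged in the introduction's Remark: setting $R=N=x/Q$ should reproduce $\gamma_k(c)(\log q)^{k^2-1}/(k^2-1)!$ with $P_k=R_k$, which both pins down the normalization and validates the residue formula.
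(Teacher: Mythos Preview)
Your proposal is correct and follows essentially the same route as the paper: apply Theorem~\ref{thmoment2}, expand the $t$-integral, reduce to $S_{j,j'}(R)=\sum_{r\le R}(h_{j,k}*\mu)(r)(h_{j',k}*\mu)(r)/\phi(r)$, simplify via \eqref{precisionhj}, and evaluate by introducing a three-variable Dirichlet series $F(s,s_1,s_2)$ factored as $\zeta(s)\zeta(s+s_1+s_2)^{k^2}\zeta(s+s_1)^{-k}\zeta(s+s_2)^{-k}\widetilde F(s,s_1,s_2)$, then read off the residue. The paper isolates your ``obstacle (i)'' as a separate combinatorial lemma (Lemma~\ref{Ck}) showing $\widetilde F(1,0,0)=C_k$; note also two small slips in your write-up: $u_2^*(x,R)$ obeys the \emph{same} bound \eqref{maju*xR} as $u_1^*(x,R)$ (linear in $R$, not $R^2$), and the $1/(j!\,j'!)$ is already absorbed into $c_{j,j'}$ via the residues at $s_1=s_2=0$, so it should not appear again in your displayed formula for $S_{j,j'}(R)$.
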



\begin{remark} 
\
\label{remark following 6.1}
\begin{enumerate}

\item We have that $c_{0,0}=1/((k-1)^2)!$, hence the constant term of $P(v)$ equals
$$ \frac {(k^2-1)!}{((k-1)^2)!(k-1)!^2} \asymp k^{2k-3}\e^{2k}, $$
however when multiplied out by $(\log R/ \log x)^{(k-1)^2}$, this quantity is $\ll c^{-k^2}$ for some absolute constant $c$. Hence, since $R \leq x^{c''/k}$, the term in parentheses in \eqref{estM2taukDELTA} is strictly smaller than $1$.

\item When $N\leq R\ll x^{c''/k}$, one should be able to show that
$$\|\Delta_R\|_2^2=xQ_k(\log x,\log R)+O\Big(\frac x{R^{c'''/k^2}}\Big),$$
where $Q_k$ is a polynomial (not necessarily homogeneous) and $c'',c'''>0$ are suitably chosen absolute constants. Such an estimate would allow one to obtain more precise version of Theorem \ref{thmoment2tauk}, however we preferred to keep the simpler statement \eqref{estDeltatauk}.

\item These calculations should be compared to those of \cite{RS16}, where the expected main term is given by
\begin{align*}&=\tilde a_k\Big(\frac{c^{k^2-1}}{(k^2-1)!}+{\rm Res}_{z=0}{\rm Res}_{\substack{s_1=0\\ s_2=0}}\e^{z}\e^{c(s_1+s_2-z)}\frac{(s_1-z)^k(s_2-z)^k}{z^2s_1^ks_2^k(s_1+s_2-z)^{k^2}} \Big)
\\
&=\tilde a_k\Big(\frac{c^{k^2-1}}{(k^2-1)!}+c^{k^2-1}{\rm Res}_{z=0}{\rm Res}_{\substack{s_1=0\\ s_2=0}}\e^{-z(c-1)/c}\e^{ s_1+s_2 }\frac{(s_1-z)^k(s_2-z)^k}{z^2s_1^ks_2^k(s_1+s_2-z)^{k^2}} \Big)
\\
&=\tilde a_k\Big(\frac{c^{k^2-1}}{(k^2-1)!}-c^{k^2-1}{\rm Res}_{z=0}{\rm Res}_{\substack{s_1=0\\ s_2=0}}\e^{ z(c-1)/c}\e^{ s_1+s_2 }\frac{(s_1+z)^k(s_2+z)^k}{z^2s_1^ks_2^k(s_1+s_2+z)^{k^2}} \Big).\end{align*} 
Moreover for $c\in [1,2)$, Rodgers and Soundararajan proved \cite[Lemma 6]{RS16} that
$$\gamma_k(c)=
\frac{c^{k^2-1}}{(k^2-1)!}-c^{k^2-1}{\rm Res}_{z=0}{\rm Res}_{\substack{s_1=0\\ s_2=0}}\e^{ z(c-1)/c}\e^{ s_1+s_2 }\frac{(s_1+z)^k(s_2+z)^k}{z^2s_1^ks_2^k(s_1+s_2+z)^{k^2}}.
$$

 \par
We have
\begin{align*}\frac{P_k(v)}{(k^2-1)!}&= {\rm Res}_{s=0}{\rm Res}_{\substack{s_1=0\\ s_2=0}}  \frac{ v^{2k-2}\e^{s} (s+s_1)^k(s+s_2)^k  }{s_1^{k}s_2^{k}s^2(s+s_1+s_2)^{k^2}} \sum_{0\leq j,j' < k} \frac{  (s_1/v)^{k-j-1 }(s_2/v)^{k-j'-1 }}{(k-j-1)!(k-j'-1)!} \\
&=  {\rm Res}_{s=0}{\rm Res}_{\substack{s_1=0\\ s_2=0}}  \frac{ v^{2k-2}\e^{s+(s_1+s_2)/v} (s+s_1)^k(s+s_2)^k  }{s_1^{k}s_2^{k}s^2(s+s_1+s_2)^{k^2}}\\
&= v^{-(k-1)^2} {\rm Res}_{s=0}{\rm Res}_{\substack{s_1=0\\ s_2=0}}  \frac{ \e^{vs+ s_1+s_2 } (s+s_1)^k(s+s_2)^k  }{s_1^{k}s_2^{k}s^2(s+s_1+s_2)^{k^2}} .
\end{align*} 
From these calculations, we get
\begin{align}\frac{1-v^{(k-1)^2}P_k(v)}{(k^2-1)!}&
=(1-v)^{k^2-1}\gamma_k\Big(\frac{1}{1-v}\Big) 
,
\label{eqcalculgammak}
\end{align}
since in our domain we have $1/(1-v)\in [1,2).$
\end{enumerate}
\end{remark}

We will require the following calculation.
\begin{lemma}\label{Ck} We have
$$C_{0,0}(k):=\prod_p \Big(1-\frac 1p\Big)^{(k-1)^2}\Big( \sum_{\nu\geq 0}   \frac{ (h_{0,k}*\mu)(p^\nu)^2}{\phi(p^\nu)}\Big)=C_k.$$
\end{lemma}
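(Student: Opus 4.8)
The plan is to verify the stated identity one prime at a time. Both $C_k$ and $C_{0,0}(k)$ are absolutely convergent Euler products, and by \eqref{calculh0d} together with the definition of $G_{d,k}$ the function $h_{0,k}$ is multiplicative, hence so is $h_{0,k}*\mu$. Using the factorisation $C_k=\prod_p(1-1/p)^{(k-1)^2}\sum_{0\le\nu\le k-1}\binom{k-1}{\nu}^2p^{-\nu}$ recalled just above the lemma, it therefore suffices to prove, for each prime $p$ (write $\beta:=1/p$),
$$\sum_{\nu\ge0}\frac{(h_{0,k}*\mu)(p^\nu)^2}{\phi(p^\nu)}=\sum_{\nu=0}^{k-1}\binom{k-1}{\nu}^2\beta^\nu.$$

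The first step is to record the local generating functions. From \eqref{calculh0d} one has $h_{0,k}(p^\nu)=(1-\beta)^k\sum_{m\ge\nu}\tau_k(p^m)\beta^{m-\nu}$, so from $\sum_{m\ge0}\tau_k(p^m)x^m=(1-x)^{-k}$ one gets
$$\sum_{\nu\ge0}h_{0,k}(p^\nu)x^\nu=\frac{(1-\beta)^k x(1-x)^{-k}-\beta}{x-\beta},\qquad G(x):=\sum_{\nu\ge0}(h_{0,k}*\mu)(p^\nu)x^\nu=\frac{(1-\beta)^k x-\beta(1-x)^k}{(x-\beta)(1-x)^{k-1}},$$
the second being $(1-x)$ times the first; since $x=\beta$ is a zero of the numerator, $G$ is holomorphic on $|x|<1$. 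As $\phi(p^0)=1$ and $\phi(p^\nu)=(1-\beta)\beta^{-\nu}$ for $\nu\ge1$, the left side above equals $1+(1-\beta)^{-1}\big(\sum_{\nu\ge0}(h_{0,k}*\mu)(p^\nu)^2\beta^\nu-1\big)$, so the claim is equivalent to
$$\sum_{\nu\ge0}(h_{0,k}*\mu)(p^\nu)^2\beta^\nu=\beta+(1-\beta)\sum_{\nu=0}^{k-1}\binom{k-1}{\nu}^2\beta^\nu.$$

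To evaluate the left side I would use the Cauchy/Parseval identity $\sum_{\nu\ge0}(h_{0,k}*\mu)(p^\nu)^2\beta^\nu=\frac1{2\pi i}\oint_{|w|=\rho}G(w)G(\beta/w)\,\frac{\d w}{w}$ for $\beta<\rho<1$, which by the residue theorem equals ${\rm Res}_{w=0}+{\rm Res}_{w=\beta}$ of the integrand. Writing $N(w):=(1-\beta)^k w-\beta(1-w)^k$ and $A(w):=(1-\beta)^k w^{k-1}-(w-\beta)^k$ one computes $G(w)=N(w)/\big((w-\beta)(1-w)^{k-1}\big)$ and $G(\beta/w)=A(w)/\big((1-w)(w-\beta)^{k-1}\big)$, so the integrand is $N(w)A(w)/\big(w(1-w)^k(w-\beta)^k\big)$. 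The residue at $w=0$ is easily seen to be $\beta$. For the residue at $w=\beta$ (where $N(\beta)=0$ kills one factor) one expands $N(w)A(w)$ into its four monomial products and observes that three of them contribute nothing to the coefficient of $(w-\beta)^{k-1}$ — two because they already carry a factor $(w-\beta)^k$, one because it is a polynomial of degree $k-2$ — leaving only the term $(1-\beta)^{2k}w^k$, so that
$${\rm Res}_{w=\beta}\ =\ \frac{(1-\beta)^{2k}}{(k-1)!}\Big(\frac{w^{k-1}}{(1-w)^k}\Big)^{(k-1)}\Big|_{w=\beta}\ =\ (1-\beta)^{2k}\,[t^{k-1}]\,\frac{(\beta+t)^{k-1}}{(1-\beta-t)^k}.$$

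The last step is elementary. Expanding $(\beta+t)^{k-1}$ and $(1-\beta-t)^{-k}=(1-\beta)^{-k}(1-t/(1-\beta))^{-k}$ and collecting the coefficient of $t^{k-1}$, the required equality ${\rm Res}_{w=\beta}=(1-\beta)\sum_\nu\binom{k-1}{\nu}^2\beta^\nu$ becomes, after clearing denominators,
$$\sum_\nu\binom{k-1}{\nu}\binom{k-1+\nu}{\nu}\beta^\nu(1-\beta)^{k-1-\nu}=\sum_\nu\binom{k-1}{\nu}^2\beta^\nu;$$
comparing coefficients of $\beta^m$ and using $\binom{k-1}{\nu}\binom{k-1-\nu}{m-\nu}=\binom{k-1}{m}\binom{m}{\nu}$, this reduces to $\Delta^m\big[\binom{\nu+k-1}{k-1}\big]\big|_{\nu=0}=\binom{k-1}{m}$, which follows at once from Vandermonde ($\binom{\nu+k-1}{k-1}=\sum_j\binom{k-1}{j}\binom{\nu}{j}$) and $\Delta^m\binom{\nu}{j}\big|_{\nu=0}=[j=m]$. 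Adding the two residues yields $\beta+(1-\beta)\sum_\nu\binom{k-1}{\nu}^2\beta^\nu$, which is exactly what was needed. The only non-routine part is the residue bookkeeping at $w=\beta$ — spotting that three of the four terms drop out — together with the short combinatorial identity at the end; everything else is direct substitution.
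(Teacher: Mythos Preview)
Your proof is correct, but it takes a genuinely different route from the paper's. The paper first establishes the key recursion $(h_{0,k}*\mu)(p^\nu)=(1-1/p)\,h_{0,k-1}(p^\nu)$ for $\nu\ge1$, which comes from $\tau_k*\mu=\tau_{k-1}$ on prime powers; it then expands the sum $\sum_{\nu\ge0}(h_{0,k}*\mu)(p^\nu)^2/\phi(p^\nu)$ as a triple sum over $(\nu,\mu,\mu')$ and, after an explicit combinatorial rearrangement (pairing indices with $\ell=\nu+\mu+\mu'$), collapses it to $(1-1/p)^{2k-1}\sum_{\ell\ge0}\tau_k(p^\ell)^2/p^\ell$, matching the original Euler factor of $C_k$. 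You instead compute the generating function $G(x)=\sum_\nu(h_{0,k}*\mu)(p^\nu)x^\nu$ in closed rational form, evaluate the weighted square sum via a Cauchy/Parseval contour integral, and reduce the residue calculation to the binomial identity $\sum_\nu\binom{k-1}{\nu}\binom{k-1+\nu}{\nu}\beta^\nu(1-\beta)^{k-1-\nu}=\sum_\nu\binom{k-1}{\nu}^2\beta^\nu$, proved by finite differences and Vandermonde. The paper's approach is shorter and exposes the structural link between $h_{0,k}$ and $h_{0,k-1}$, landing on the $\tau_k$-form of $C_k$; yours is more mechanical and self-contained, landing directly on the alternative form $\sum_\nu\binom{k-1}{\nu}^2 p^{-\nu}$ quoted just before the lemma.
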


\begin{proof}
The formula \eqref{calculh0d} implies that $$
h_{0,k}(d)= \tau_k(d)G_{d,k}(1)=
\prod_{p\mid d } \Big( 1-\frac{1}{p}\Big)^k \Big( \sum_{\mu\geq 0}
\frac{\tau_k(p^{\mu+v_p(d)})}{ p^{\mu }}\Big).$$  
However, for $\nu\geq 1$ we have
\begin{align*}(h_{0,k}*\mu)(p^\nu)&= \Big( 1-\frac{1}{p}\Big)^k\sum_{\mu\geq 0}
\frac{\tau_k(p^{\mu+\nu})-\tau_k(p^{\mu+\nu-1})}{ p^{\mu }}
\cr&= \Big( 1-\frac{1}{p}\Big)^k\sum_{\mu\geq 0}
\frac{\tau_{k-1}(p^{\mu+\nu})}{ p^{\mu }} =\Big( 1-\frac{1}{p}\Big)h_{0,k-1}(p^\nu),\end{align*}
since
\begin{align*}\tau_k(p^{\mu+\nu})-\tau_k(p^{\mu+\nu-1})&=(\tau_k*\mu)(p^{\mu+\nu}) =\tau_{k-1}(p^{\mu+\nu}).\end{align*}
Moreover $$(h_{0,k}*\mu)(1)=1
=\Big( 1-\frac{1}{p}\Big)^{k-1}\sum_{\mu\geq 0}
\frac{\tau_{k-1}(p^{\mu}) }{ p^{\mu }}.$$
Hence
\begin{equation}\label{som1tautau}
\sum_{\nu\geq 0}   \frac{  (h_{0,k}*\mu)(p^\nu)^2}{\phi(p^\nu)}
 =1+\Big( 1-\frac{1}{p}\Big)^{2k-1}\sum_{\substack{\nu\geq 1\\ \mu,\mu'\geq 0}}\frac{ \tau_{k-1}(p^{\mu+\nu})\tau_{k-1}(p^{\mu'+\nu})}{ p^{\nu+\mu+\mu'}}.\end{equation}
Fixing $\ell=\nu+\mu+\mu'$, we see that
\begin{align*}\sum_{\substack{\nu\geq 1\\ \mu,\mu'\geq 0}}\frac{ \tau_{k-1}(p^{\mu+\nu})\tau_{k-1}(p^{\mu'+\nu})}{ p^{\nu+\mu+\mu'}}&=\sum_{\ell\geq 0 }\sum_{0\leq \mu+\mu'\leq \ell-1}\frac{ \tau_{k-1}(p^{\ell-\mu})\tau_{k-1}(p^{\ell-\mu'}) }{ p^{\ell}} 
\end{align*}
and
\begin{align*}1&=\frac{  (h_{0,k}*\mu)(1)^2}{\phi(1)}=\Big( 1-\frac{1}{p}\Big)^{2k-2}\sum_{\mu,\mu'\geq 0}
\frac{\tau_{k-1}(p^{\mu})\tau_{k-1}(p^{\mu'}) }{ p^{\mu+\mu' }} 
\cr&=\Big( 1-\frac{1}{p}\Big)^{2k-1}\sum_{\mu,\mu'\geq 0}
\frac{\tau_{k-1}(p^{\mu})\tau_{k-1 }(p^{\mu'}) }{ p^{\mu+\mu' }}\sum_{\nu\geq 0}\frac{1}{ p^{\nu}}
\cr&=\Big( 1-\frac{1}{p}\Big)^{2k-1}\sum_{\ell\geq0}\sum_{0\leq \mu+\mu' \leq \ell}
\frac{\tau_{k-1}(p^{\mu})\tau_{k-1 }(p^{\mu'}) }{ p^{\ell}}.
 \end{align*}
Substituting these two formulas in \eqref{som1tautau}, we obtain that
\begin{align*} 
\sum_{\nu\geq 0}    \frac{  (h_{0,k}*\mu)(p^\nu)^2}{\phi(p^\nu)}
  =\Big( 1-\frac{1}{p}\Big)^{2k-1}&\sum_{\ell\geq0}\Big\{\sum_{0\leq \mu+\mu' \leq \ell}
\frac{\tau_{k-1}(p^{\mu})\tau_{k-1 }(p^{\mu'}) }{ p^{\ell}}\cr&\quad+ \sum_{0\leq \mu+\mu'\leq \ell-1}\frac{ \tau_{k-1}(p^{\ell-\mu})\tau_{k-1}(p^{\ell-\mu'}) }{ p^{\ell}} \Big\}.\end{align*}
In the second sum in brackets, the exponents of the powers of $p$ in the arguments of $\tau_{k-1}$ satisfy
$$ ( {\ell-\mu})+({\ell-\mu'})\geq \ell+1.$$ 
Hence, changing $(\ell-\mu,\ell-\mu') $ to $( \mu, \mu') $, we obtain that
\begin{align*}\sum_{\nu\geq 0}   \frac{  (h_{0,k}*\mu)(p^\nu)^2}{\phi(p^\nu)}&=
\Big( 1-\frac{1}{p}\Big)^{2k-1}\sum_{\ell\geq0}
\frac{1}{ p^{\ell}}\Big(\sum_{0\leq \mu  \leq \ell}\tau_{k-1}(p^{\mu})\Big)^2  \cr&=\Big(1-\frac{1}{p}\Big)^{2k-1}\sum_{\ell\geq 0 }\frac{  \tau_{k}(p^{\ell})^2}{ p^{\ell}}, 
\end{align*}
which in turn implies that $C_{0,0}(k)=C_k$. 
\end{proof}

We are now ready to prove Theorem \ref{thmomenttauk2}.

\begin{proof}[Proof of Theorem \ref{thmomenttauk2}]
Our starting point is the estimate \eqref{estM2DELTA}. By definition of $u_{j,k}$ (see \eqref{eqdefujk}), we obtain that the second term of \eqref{estM2DELTA} satisfies
\begin{align*} S_k(x,R)&=x\sum_{0\leq j,j' < k}  \frac{S_{j,j'}(R)}{(k-1-j)!(k-1-j')!}\int_0^x(\log t)^{2k-j-j'-2}\d t  
\\&=x\sum_{0\leq j,j' < k} {2k-2-j-j' \choose k-1-j} \sum_{ \ell=0 }^{2k-2-j-j' }(-1)^{\ell+j+j'}  \frac{(\log x)^{\ell}}{\ell!} 
S_{j,j'}(R),
  \end{align*} 
with
$$S_{j,j'}(R):=\sum_{\substack{ r\leq R  }}   \frac{ (h_{j,k}*\mu)(r)(h_{j',k}*\mu)(r)  }{\phi(r)}.$$

Let us now estimate the sums $ S_{j,j'}(R)$.  
Thanks to \eqref{precisionhj}, we can approximate these sums by substituting $h_{j,k}$ and $h_{j',k}$ with $g_{j,k}$ and $g_{j',k}$ with $g_{j,k}=h_{0,k}(-\log )^j/j!$. By this estimate and Proposition \ref{proptauk}, the induced error term is 
\begin{align*}\ll& (\log R)^{j+j'-1}\sum_{\substack{ r\leq R  }}   \frac{ (\tau_k * \widetilde h_k *\mu)(r)^2  }{\phi(r)}\\&=  (\log R)^{j+j'-1}\sum_{\substack{ r\leq R  }}   \frac{ (\tau_{k-1} * \widetilde h_k )(r)^2  }{\phi(r)} \ll (\log R)^{(k-1)^2+j+j'-1}. \end{align*}

We use the formulas
$$(g_{j,k}*\mu)(r)=\sum_{\ell\mid r} g_{j,k}(\ell)\mu(r/\ell),\qquad 
(g_{j',k}*\mu)(r)=\sum_{\ell'\mid r} g_{j,k}( \ell')\mu(r/\ell'),$$
and parametrize $\ell=dm$, $\ell'=d'm$ with $(d,d')=1$, and end up with the expression
\begin{align*}F_{j,j'}(s)&=\sum_{\substack{ r=1  }}^\infty   \frac{ (g_{j,k}*\mu)(r)(g_{j',k}*\mu)(r)  }{\phi(r)r^{s-1}}
\cr&=\sum_{\substack{ (d,d',m) \\(d,d')=1 }}  \mu( d)\mu( d') \frac{  g_{j,k}(dm)  g_{j',k} (d'm)  }{\phi(  dd'm)(dd'm)^{s-1} }\sum_{\substack{ n \\(n,dd')=1}}\frac{\mu(n )^2\phi(   m)}{\phi(n  m)n^{s-1}}.\end{align*}
We write the inner sum as an Euler product:
\begin{align*} \sum_{\substack{ n \\(n,dd')=1}}\frac{\mu(n )^2\phi(   m)}{\phi(n  m)n^{s-1}}=\prod_{p\nmid dd'm}\Big(1+\frac{1}{p^s(1-1/p)}\Big)
\prod_{\substack{p\mid m\\p\nmid dd'}}\Big(1+\frac{1}{p^s }\Big),\end{align*}
hence
\begin{align*}F_{j,j'}(s)&= \sum_{\substack{ (d,d',m) \\(d,d')=1 }}  \mu( d d') \frac{  h_{0,k}(dm)  h_{0,k} (d'm)(-\log dm)^{j}(-\log dm)^{j'}  }{j!j'!\phi(  dd'm)(dd'm)^{s-1} }\cr&\qquad\prod_{p\nmid dd'm}\Big(1+\frac{1}{p^s(1-1/p)}\Big)
\prod_{\substack{p\mid m\\p\nmid dd'}}\Big(1+\frac{1}{p^s }\Big)
\cr&=\frac{1}{j!j'!}\frac{\partial^{j+j'}F}{\partial^js_1\partial^{j'}s_2} (s,0,0),\end{align*}
with
\begin{align*}F(s,s_1,s_2)&= \sum_{\substack{ (d,d',m) \\(d,d')=1 }}   \frac{\mu( d d')  h_{0,k}(dm)  h_{0,k} (d'm)  }{\phi(  dd'm)(dd'm)^{s-1}(dm)^{s_1}(d'm)^{s_2} }\cr&\qquad\prod_{p\nmid dd'm}\Big(1+\frac{1}{p^s(1-1/p)}\Big)
\prod_{\substack{p\mid m\\p\nmid dd'}}\Big(1+\frac{1}{p^s }\Big)
.\end{align*}
This series can be written as an Euler product with local factor  $F_p(s,s_1,s_2)$ at $p$, where
\begin{align*}F_p(s,s_1,s_2)&= 1+\frac{1}{p^s(1-1/p)}+
\sum_{\nu\geq 1}\frac{(1+1/p^s)h_{0,k}(p^\nu)^2}{(1-1/p)p^{\nu(s+s_1+s_2)}}+
\cr&\qquad -\Big(  \frac{ h_{0,k}(p )  }{(1-1/p)p^{ s}}+
\sum_{\nu\geq 1}\frac{ h_{0,k}(p^{\nu+1})h_{0,k}(p^\nu) }{(1-1/p)p^{\nu(s+s_1+s_2)+s}}  \Big)\big(p^{-s_1}+p^{-s_2}\big)
 .\end{align*}
Hence the function $F$ satisfies
$$F(s,s_1,s_2)=\zeta(s)\zeta(s+s_1+s_2)^{k^2}\zeta(s+s_1 )^{-k}\zeta(s+s_2 )^{-k}\widetilde F(s,s_1,s_2)$$
with a factor $\widetilde F$ which is holomorphic and bounded when $\Re e(s)\geq \tfrac56$,
$\Re e(s_1)\geq -\tfrac1{6}$ and $\Re e(s_2)\geq -\tfrac1{6}$.
Taking $(j,j')=(0,0)$ and applying Lemma \ref{Ck}, we obtain that
$$\widetilde F(1,0,0)=\lim_{s\to1}F(s,0,0)\zeta(s)^{-(k-1)^2}=C_{0,0}(k)= C_k .$$
We have
$$F_{j,j'}(s) =\frac{1}{j!j'!}\frac{\partial^{j+j'}F}{\partial^js_1\partial^{j'}s_2} (s,0,0)= {\rm Res}_{\substack{s_1=0\\ s_2=0}} \frac{F(s,s_1,s_2)}{s_1^{j+1}s_2^{j'+1}}.$$ 

%
   
By definition of $F_{j,j'}(s), $

\begin{align*} S_{j,j'}(R)&=\sum_{\substack{ r\leq R  }}   \frac{ (g_{j,k}*\mu)(r)(g_{j',k}*\mu)(r)  }{\phi(r)}+ O\big( (\log R)^{(k-1)^2+j+j'-1}\big)
\\&= \frac 1{2 \pi i}\int_{\Re(s)=2} F_{j,j'}(s) \frac{R^{s-1}}{s-1} ds + O\big( (\log R)^{(k-1)^2+j+j'-1}\big) \\
&={\rm Res}_{s=1}
 {\rm Res}_{\substack{s_1=0\\ s_2=0}} \frac{F(s,s_1,s_2)}{s_1^{j+1}s_2^{j'+1}}\frac{R^{s-1}}{s-1}+ O\big( (\log R)^{(k-1)^2+j+j'-1}\big)  .   
\end{align*}
  We therefore have that
\begin{align*}S_k(x,R)&=x(\log x)^{2k-2 } \sum_{0\leq j,j' < k}  \frac{ S_{j,j'}(R)}{(k-j-1)!(k-j'-1)!(\log x)^{j+j' }} \cr
&\quad+ O\big( x(\log x)^{2k-2}(\log R)^{k^2-2k}\big) \cr
 = x(\log x)^{2k-2 }&\sum_{0\leq j,j' < k}{\rm Res}_{s=1}{\rm Res}_{\substack{s_1=0\\ s_2=0}}\frac{ s_1^{-j-1}s_2^{-j'-1}R^{s-1}F(s,s_1,s_2)}{(k-j-1)!(k-j'-1)!(\log x)^{j+j' } (s-1)}\cr
&\quad+ O\big( x(\log x)^{2k-2}(\log R)^{k^2-2k}\big) \cr
\\ = C_k  x(\log x)^{2k-2 }&  \sum_{0\leq j,j' < k}{\rm Res}_{s=0}{\rm Res}_{\substack{s_1=0\\ s_2=0}}\frac{ R^{s} s_1^{-j-1}s_2^{-j'-1}(s+s_1)^k(s+s_2)^k (s+s_1+s_2)^{-k^2} }{(k-j-1)!(k-j'-1)!(\log x)^{j+j' } s^2}\cr
&\quad+ O\big( x(\log x)^{2k-2}(\log R)^{k^2-2k}\big) \cr
\\ =  C_k x(\log x)^{2k-2 } &(\log R)^{(k-1)^2} \sum_{0\leq j,j' < k} \frac{v^{j+j'}c_{j,j'}}{(k-j-1)!(k-j'-1)! }+ O\big( x(\log x)^{2k-2}(\log R)^{k^2-2k}\big). \cr
\end{align*}
\end{proof}

\begin{proof}[Proof of Theorem \ref{thmoment2tauk}]
We combine Theorems \ref{thmoment2tauk} and \ref{thmoment2}. Note that $u_2^*(x,R)$ satisfies the same bound~\eqref{maju*xR} as $u_1^*(x,R)$, and hence the error term in  \eqref{estM2DELTA} is $$u_2^*(x,R)^2+ \frac{U_{0,k}(x)u_2^*(x,R)}{L(x)(\log Q(x))^2}
\ll x(\log x)^{2(k-1)}\Big( \frac{R^2}{x}(\log R)^{2(k-1)}+\frac{ R(\log R)^{ k-1 }}{x^{c /k}(\log x)^2}\Big).$$
The choice of $R$ is admissible as soon as $\eta<\min(c,\tfrac 12)$.
\end{proof}

\section*{Acknowledgments}  
The second named author was supported at the IMJ-PRG by a postdoctoral fellowship of the Fondation Sciences Math\'ematiques de Paris, and at the University of Ottawa by an NSERC Discovery Grant. We would like to thank Sary Drappeau for very useful conversations, Brad Rodgers and Kannan Soundararajan for sharing their work with us, and the IMG-PRG for excellent working conditions. 
%

\end{document}